\newcommand{\bbC}{{\mathbb C}}
\newcommand{\bbD}{{\mathbb D}}
\newcommand{\bbH}{{\mathbb H}} 
\newcommand{\bbR}{{\mathbb R}}
\newcommand{\bbZ}{{\mathbb Z}}
\def\Re{\operatorname{Re}}
\def\sgn{\operatorname{sgn}}
\def\jac{\operatorname{Jac}}
\def\z{\zeta} 
\def\p{\partial}
\def\Hol{\operatorname{Hol}}
\def\Aut{\operatorname{Aut}}
\newtheorem{thm}{Theorem}[section]
\newtheorem{prop}[thm]{Proposition}
\newtheorem{lem}[thm]{Lemma}
\newtheorem{remark}[thm]{Remark}
\newtheorem{theorem}{Theorem}
\begin{document}

\title[Holomorphic function spaces on the Hartogs triangle]{Holomorphic function spaces on the Hartogs triangle} 
\author[A. Monguzzi]{Alessandro Monguzzi}
\address{Dipartimento di Matematica e Applicazioni, Universit\`a degli Studi di
  Milano--Bicocca, Via R. Cozzi 55, 20126 Milano, Italy}
\email{{\tt alessandro.monguzzi@unimib.it}}
\keywords{Hartogs triangle, Bergman projection, Hardy space, Dirichlet space}
\thanks{{\em Math Subject Classification} 31C25; 32A07; 32A25; 32A36; 32A50}
\thanks{The author is partially supported by the 2015 PRIN grant
  \emph{Real and Complex Manifolds: Geometry, Topology and Harmonic analysis}  
  of the Italian Ministry of Education (MIUR) and by the Progetto GNAMPA (INdAM) 2020 “Alla frontiera tra l’analisi complessa in piu’
variabili e l’analisi armonica”.}

\begin{abstract}
The definition of classical holomorphic function spaces such as the Hardy space or the Dirichlet space on the Hartogs triangle is not canonical. In this paper we introduce a natural family of holomorphic function spaces on the Hartogs triangle which includes some weighted Bergman spaces,  a candidate Hardy space and a candidate Dirichlet space. For the weighted Bergman spaces and the Hardy space we study the $L^p$ mapping properties of Bergman and Szeg\H o projection respectively, whereas for the Dirichlet space we prove it is isometric to the Dirichlet space on the bidisc.
\end{abstract}
\maketitle

The Hartogs triangle 
\begin{equation*}
\bbH=\big\{(z_1,z_2)\in\mathbb C^2: |z_1|<|z_2|<1\big\}
\end{equation*}
is a peculiar domain which is a source of many pathological facts in complex analysis (\cite{Shaw}). For instance, it is not hard to prove that $\bbH$ is a domain of holomorphy which does not admit a Stein neighborhood basis, that is, it is not true that $\overline \bbH=\cap_k \Omega_k$, where each $\Omega_k$ is a pseudoconvex domain containing $\bbH$. The domain $\bbH$ is biholomorphically equivalent to the product domain $\mathbb D\times\bbD^*$, where $\bbD$ denotes the unit disc in the complex plane and $\bbD^*$ denotes the punctured disc, by means of the map $\Phi: \mathbb D\times\bbD^*\to\bbH$, $(w_1,w_2)\mapsto (w_1w_2,w_2)$. 

Recently, several mathematicians careful studied the mapping properties of the Bergman projection and related problems both for the Hartogs triangle and for certain its generalizations. In \cite{CZ} the $L^p$ mapping properties for the classical Hartogs triangle are completely characterized; in \cite{HW2, HW} the authors exploit techniques of dyadic harmonic analysis to investigate the mapping properties of the Bergman projection on weighted $L^p$ spaces and they also prove some endpoint estimates; in \cite{EMcN,E-thesis,E-pacific, EMcN17,CEMcN,EdMc-sob} the authors introduce some generalizations of the Hartogs triangle, called \emph{thin} or \emph{fat} Hartogs triangles, and they study several problems such as the $L^p$ and Sobolev mapping properties of the Bergman projection and the zeros of the Bergman kernel; in
\cite{Khanh} Bergman-Toeplitz operators on the fat Hartogs triangle are studied. In \cite{Chen1,Chen2,Chen3,Chen4,Chen5} some other generalizations of the Hartogs triangle and associated weighted Bergman projections are investigated. 

In most of the cited papers the peculiar geometry of the Hartogs triangle, or of its generalizations, forces the Bergman projection to be unbounded on some $L^p$ spaces. However, the exact relationship between the geometry of the domain and the regularity of the Bergman projection is not fully understood in generality. A similar phenomenon, where the geometry of the domain forces an irregular behavior of the Bergman projection, were already observed in the setting of the worm and model worm domains \cite{Bar,P1,P2,Bar2,P3,P4}. Analogous results for the Szeg\H o projection, were discussed and proved in the setting of model worm domains in \cite{M1,M2,M3,M4,M5,LS} and in other setting in  \cite{BB95, LS04, MZ15}.

It would be interesting to study on $\bbH$  and its generalizations other classical function spaces besides the Bergman and weighted Bergman spaces. Unfortunately, because of the peculiar geometry, the definition of classical spaces such the Hardy space or the Dirichlet space is not canonical on $\bbH$.  The main novelty in this paper is the introduction of a family of holomorphic function spaces which includes a candidate Hardy space and a candidate Dirichlet space. Our spaces depend on a real parameter  $\nu\in[-2,+\infty)$; for $\nu>-1$ we deal with some weighted Bergman spaces $A^2_\nu(\bbH)$, the case $\nu=-1$ identifies our candidate Hardy space $H^2(\bbH)$, we have some weighted Dirichlet spaces $\mathcal D_{\nu}$ for $-2<\nu<-1$ and we have our candidate Dirichlet space $\mathcal D(\bbH)$ for $\nu=-2$.

Briefly, for $\nu>-1$ and $1<p<\infty$, the weighted Bergman spaces $A^p_{\nu}(\bbH)$ are defined as
\begin{equation*}
A^p_\nu(\bbH)= L^p_{\nu}(\bbH)\cap \Hol(\bbH)
\end{equation*}
where $L^p_\nu(\bbH)$ denotes the weighted $L^p$ spaces endowed with the norm
\begin{equation}\label{Lp-norm}
\|f\|^p_{L^p_\nu}:=C_\nu\int_{\bbH}|f(z_1,z_2)|^p|z_2|^{\nu}(1-|z_1/z_2|^2)^\nu(1-|z_2|^2)^{\nu}\, dz.
\end{equation}
Here $dz$ is the Lebesgue measure in $\bbC^2$ and $C_\nu$ is a normalization constant. We refer the reader to Section \ref{weighted-bergman} for more comments and motivations about the definition of the spaces $A^2_{\nu}(\bbH)$. The weighted Bergman projection $P_\nu$ is the orthogonal projection from $L^2_\nu(\bbH)$ onto $A^2_\nu(\bbH)$. Let $\lfloor x\rfloor$ be the floor function. We prove the following result.

\begin{theorem}\label{main-1}
 Let $\nu>-1$ and let $P_\nu$ be the weighted Bergman projection densely defined on  $L^2_\nu(\bbH)\cap L^p_\nu(\bbH)$ for $p\in (1,+\infty)$. Then, we have the following:
 \begin{enumerate}[(i)]
 \item if $\nu>0$ and $\nu\neq 2n,n\in\mathbb N$, the weighted Bergman projection $P_\nu$ extends to a bounded operator $P_\nu: L^p_\nu(\bbH)\to L^p_\nu(\bbH)$ if and only if $p\in \big(2-\frac{\nu-2\lfloor\nu/2\rfloor}{2+\nu-\lfloor\nu/2\rfloor}, 2+\frac{\nu-2\lfloor\nu/2\rfloor}{2+\lfloor\nu/2\rfloor}\big)$;
 \medskip
 
 \item if $\nu=2n,n \in\mathbb N_0$, the weighted Bergman projection $P_\nu$ extends to a bounded operator $P_\nu: L^p_\nu(\bbH)\to L^p_\nu(\bbH)$ if and only if  $p\in \Big(2-\frac{2}{3+n}, 2+\frac 2{1+n}\Big)$;
 
 \medskip
 
 \item if $-1<\nu<0$,  the weighted Bergman projection $P_\nu$ extends to a bounded operator $P_\nu: L^p_\nu(\bbH)\to L^p_\nu(\bbH)$ if and only if $p\in \Big(2-\frac{2+\nu}{3+\nu}, 4+\nu\Big)$.
\end{enumerate}
\end{theorem}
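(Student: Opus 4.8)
The plan is to pull the problem back to the bidisc through the biholomorphism $\Phi$. Setting $g=f\circ\Phi$ and using that the complex Jacobian of $\Phi$ at $(w_1,w_2)$ is $w_2$ (so the real Jacobian is $|w_2|^2$), the substitution $z=\Phi(w)$ turns the weight in \eqref{Lp-norm} into a product weight:
\begin{equation*}
\|f\|_{L^p_\nu(\bbH)}^p=C_\nu\int_{\bbD\times\bbD^*}|g(w_1,w_2)|^p\,(1-|w_1|^2)^\nu\,|w_2|^{\nu+2}(1-|w_2|^2)^\nu\,dw .
\end{equation*}
Thus $\Phi^*$ is an isometric isomorphism of $L^p_\nu(\bbH)$ onto $L^p(\bbD\times\bbD^*,W)$, $W(w)=C_\nu(1-|w_1|^2)^\nu|w_2|^{\nu+2}(1-|w_2|^2)^\nu$, carrying $\Hol(\bbH)$ onto $\Hol(\bbD\times\bbD^*)$. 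Since $W$ is a product of rotation-invariant weights, the monomials $w_1^jw_2^k$ ($j\ge0$, $k\in\bbZ$) are mutually orthogonal in $L^2(W)$, with $\|w_1^jw_2^k\|_{L^2(W)}^2=d_jc_k$ where $d_j<\infty$ for every $j\ge0$ while $c_k$ is a positive multiple of the Beta integral $B(k+\tfrac{\nu}{2}+2,\nu+1)$ and is finite precisely when $k+\tfrac{\nu}{2}+2>0$. Let $k_0=k_0(\nu)$ be the least integer with $k_0>-\tfrac{\nu}{2}-2$; since $\nu>-1$ one has $k_0\le-1$. A Laurent--Taylor expansion then identifies $A^2_\nu(\bbH)$ with $\overline{\operatorname{span}}\{w_1^jw_2^k:\,j\ge0,\ k\ge k_0\}\subset L^2(W)$, and hence $P_\nu$ with the tensor product $P^{(1)}\otimes P^{(2)}$, where $P^{(1)}$ is the weighted Bergman projection of $\bbD$ with weight $(1-|w_1|^2)^\nu$ and $P^{(2)}$ is the orthogonal projection of $L^2(\bbD,\rho)$, $\rho(w)=|w|^{\nu+2}(1-|w|^2)^\nu$, onto $\overline{\operatorname{span}}\{w^k:\,k\ge k_0\}$. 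Testing on elementary tensors and using Fubini, $P_\nu$ extends boundedly on $L^p_\nu(\bbH)$ if and only if $P^{(1)}$ extends boundedly on $L^p((1-|w_1|^2)^\nu)$ and $P^{(2)}$ extends boundedly on $L^p(\rho)$. The former holds for every $1<p<\infty$ (this is classical, and in any case follows from the Schur test carried out below after deleting the factor $|w|^{\nu+2}$), so the theorem reduces to determining the exponents $p$ for which $P^{(2)}$ is bounded on $L^p(\rho)$.

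The projection $P^{(2)}$ is the integral operator on $L^2(\bbD,\rho)$ with kernel $K^{(2)}(w,\zeta)=\sum_{k\ge k_0}c_k^{-1}(w\bar\zeta)^k$; write $K^{(2)}=K_{\mathrm{reg}}+K_{\mathrm{sing}}$ with $K_{\mathrm{reg}}=\sum_{k\ge0}c_k^{-1}(w\bar\zeta)^k$ and $K_{\mathrm{sing}}=\sum_{k_0\le k\le-1}c_k^{-1}(w\bar\zeta)^k$ (a finite sum). From $c_k^{-1}\asymp(k+1)^{\nu+1}$ and the elementary bound $\big|\sum_{k\ge0}(k+1)^{s}x^k\big|\lesssim|1-x|^{-(s+1)}$ for $|x|<1$, $s>-1$, one gets $|K_{\mathrm{reg}}(w,\zeta)|\lesssim|1-w\bar\zeta|^{-(\nu+2)}$. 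A Schur test with the test function $(1-|w|^2)^{-\beta}$, $0<\beta<(\nu+1)/\max(p,p')$, together with the classical Forelli--Rudin estimate for $\int_{\bbD}(1-|\zeta|^2)^a|1-w\bar\zeta|^{-b}\,dA(\zeta)$, shows that the operator $R$ with kernel $K_{\mathrm{reg}}$ is bounded on $L^p(\rho)$ for all $1<p<\infty$ (here one uses $\nu>-1$ and $|w|^{\nu+2}\le 1$ on $\bbD$). Since $P^{(2)}=R+S$ with $S$ the finite-rank operator of kernel $K_{\mathrm{sing}}$, $P^{(2)}$ is bounded on $L^p(\rho)$ if and only if $S=\sum_{k_0\le k\le-1}c_k^{-1}\la\,\cdot\,,\zeta^k\ra_\rho\,w^k$ is.

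Each summand $S_kf=c_k^{-1}\la f,\zeta^k\ra_\rho\,w^k$ has rank one, hence is bounded on $L^p(\rho)$ if and only if $w^k\in L^p(\rho)$ (so the image lies in $L^p(\rho)$) and $\zeta^k\in L^{p'}(\rho)$ (so $f\mapsto\la f,\zeta^k\ra_\rho$ is bounded on $L^p(\rho)$); a computation in polar coordinates shows $w^k\in L^q(\rho)$ if and only if $q<\tfrac{\nu+4}{|k|}$, the endpoint being genuinely excluded. The binding index is $k=k_0$. Thus if $p<\tfrac{\nu+4}{|k_0|}$ and $p'<\tfrac{\nu+4}{|k_0|}$ every $S_k$, and hence $S$ and $P^{(2)}$, is bounded on $L^p(\rho)$. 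Conversely, if $p\ge\tfrac{\nu+4}{|k_0|}$ then $w^{k_0}\notin L^p(\rho)$, so a bounded $f$ supported compactly in $\bbD^*$ with $\la f,\zeta^{k_0}\ra_\rho\ne0$ produces $Sf\notin L^p(\rho)$; and if $p'\ge\tfrac{\nu+4}{|k_0|}$ then $\zeta^{k_0}\notin L^{p'}(\rho)$, so one picks $f_j$ (bounded, compactly supported in $\bbD^*$) with $\|f_j\|_{L^p(\rho)}\le1$ and $|\la f_j,\zeta^{k_0}\ra_\rho|\to\infty$, and because on the finite-dimensional space $\operatorname{span}\{w^{k_0},\dots,w^{-1}\}$ (contained in $L^p(\rho)$ here, as $p<2<\tfrac{\nu+4}{|k_0|}$) the $L^p(\rho)$-norm dominates the coefficient of $w^{k_0}$, one gets $\|Sf_j\|_{L^p(\rho)}\to\infty$. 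Hence $P^{(2)}$, and therefore $P_\nu$, is bounded on $L^p$ exactly for $p\in\big((\tfrac{\nu+4}{|k_0|})',\,\tfrac{\nu+4}{|k_0|}\big)$ (note $\tfrac{\nu+4}{|k_0|}>2$, so this interval is non-empty). Finally $k_0$ is the least integer exceeding $-\tfrac{\nu}{2}-2$, so $|k_0|=\tfrac{\nu}{2}+1$ when $\nu\in2\mathbb{N}_0$ and $|k_0|=\lfloor\tfrac{\nu}{2}\rfloor+2$ otherwise; inserting these values and rewriting the conjugate exponent yields precisely the three intervals in (i), (ii) and (iii). The points demanding care are the structural step of the first paragraph — tracking exactly which Laurent modes in $w_2$ survive and checking that $P_\nu$ genuinely factors as a tensor product — together with the universal $L^p$-boundedness of the regular part $R$; once these are in place, the endpoints are forced entirely by the elementary integrability of the single monomial $w^{k_0}$ against $\rho$.
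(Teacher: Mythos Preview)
Your approach is correct and takes a genuinely different route from the paper's. The paper works directly on $\bbH$: for sufficiency it applies Schur's lemma to $|K_\nu|$ with the three-parameter test function $(1-|z_1/z_2|^2)^{-\alpha}(1-|z_2|^2)^{-\beta}|z_2|^{-\gamma}$, using a hypergeometric estimate for the kernel; for necessity it tests on $f=\bar z_2^{\,1+\lceil\nu/2\rceil}$ (so that $P_\nu f$ is a constant multiple of $z_2^{-1-\lceil\nu/2\rceil}$) and invokes duality. You instead transport everything through $\Phi$, factor the pulled-back projection as $P^{(1)}\otimes P^{(2)}$, and split $P^{(2)}$ into a universally bounded regular part plus a finite-rank singular part governed by the single monomial $w^{k_0}$. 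This makes transparent \emph{why} the critical exponent is $(\nu+4)/|k_0|$ --- the lowest surviving Laurent mode is solely responsible --- and handles necessity and sufficiency in one stroke; the paper's direct Schur computation produces the same range but somewhat opaquely, though without the tensor-product bookkeeping.

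One step needs tightening: the bound $|K_{\mathrm{reg}}(w,\zeta)|\lesssim|1-w\bar\zeta|^{-(\nu+2)}$ does not follow from $c_k^{-1}\asymp(k+1)^{\nu+1}$ alone. That asymptotic, via the triangle inequality, yields only the real-variable bound $(1-|w||\zeta|)^{-(\nu+2)}$, and the Schur test with $(1-|\cdot|^2)^{-\beta}$ then returns $(1-|w|)^{-\beta p'-1}$ rather than $(1-|w|)^{-\beta p'}$, so it fails. You need the exact identification of $K_{\mathrm{reg}}$ with a constant multiple of the hypergeometric series $F(\tfrac32\nu+3,1;\tfrac\nu2+2;w\bar\zeta)$; then the Euler transformation $F(\alpha,\beta;\gamma;z)=(1-z)^{\gamma-\alpha-\beta}F(\gamma-\alpha,\gamma-\beta;\gamma;z)$ together with the boundedness of the transformed $F$ at $z=1$ when $\Re(\gamma-\alpha-\beta)>0$ gives the required complex estimate --- precisely the mechanism the paper uses for its own kernel bound.
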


\medskip

A comparison between Theorem \ref{main-1} and the results in \cite{Chen2} is needed. In \cite[Theorem 1]{Chen2} the $L^p$ mapping properties of the weighted Bergman projection $\widetilde P_\nu$ associated to the weighted Bergman spaces defined with respect to the measure $|z_2|^\nu\, dz,  \nu\in\mathbb R$, are completely characterized. If we restrict to the case $\nu>-1$, the projections $\widetilde P_\nu$ and $P_\nu$ share the same $L^p$ mapping properties. The extra factor $(1-|z_1/z_2|^2)^2(1-|z_2|^2)^\nu$ in the measure we use to define the spaces $A^2_\nu(\bbH)$ does not influence the mapping properties of the weighted Bergman projection. However, the weighted Bergman spaces studied in \cite{Chen2} are meaningful for any $\nu\in\mathbb R$, whereas our weighted Bergman spaces turn out to be trivial for $\nu\leq-1$ (Remark \ref{remark}). Hence, we introduce the weighted Dirichlet spaces.

\medskip

We refer the reader to Section \ref{hardy-space} for a precise definition of the Hardy space $H^2(\bbH)$ and for some motivation on why it corresponds to the case $\nu=-1$. Here we only say that $H^2(\bbH)$ can be described as the space of functions of the form
\begin{equation*}
f(z_1,z_2)=\sum_{j=0}^{+\infty}\sum_{k=-j-1}^{+\infty}a_{jk}z_1^jz_2^k
\end{equation*}
endowed with the norm
\begin{equation*}
\|f\|^2_{H^2}:=\sum_{j=0}^{+\infty}\sum_{k=-j-1}^{+\infty}|a_{jk}|^2.
\end{equation*}

The space $H^2(\bbH)$ can be identified with a closed subspace $H^2(d_b(\bbH))$ of $L^2(d_b(\bbH))$, the space of square-integrable functions on the distinguished boundary $d_b(\bbH)$ of the Hartogs triangle $\bbH$. The boundary $d_b(\bbH)$ coincides with $\p\bbD\times\p \bbD$, where $\bbD$ is the unit disc and $\p\bbD$ its topological boundary (see, for instance, \cite[Theorem 1.9]{Chen-PAMQ}). The Szeg\H o projection associated to $H^2(\bbH)$ is the Hilbert space projection operator $S:L^2(d_b(\bbH))\to H^2(d_b(\bbH))$. We obtain the following result.

\begin{theorem}\label{main-2}
 The Szeg\H o projection $S$ densely defined on $L^2(d_b(\bbH))\cap L^p(d_b(\bbH))$ extends to a bounded operator $S:L^p(d_b(\bbH))\to L^p(d_b(\bbH))$ for any $p\in(1,+\infty)$.
\end{theorem}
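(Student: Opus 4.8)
The plan is to reduce the problem on the Hartogs triangle to a problem on the product domain $\bbD\times\bbD$ via the biholomorphism $\Phi$, and then to exploit the tensor structure. Concretely, the distinguished boundary $d_b(\bbH)$ is $\p\bbD\times\p\bbD$, and the description of $H^2(\bbH)$ in terms of the monomial basis $\{z_1^jz_2^k\}_{j\ge 0,\,k\ge -j-1}$ strongly suggests that, after the change of variables $z_1=w_1w_2$, $z_2=w_2$, the monomial $z_1^jz_2^k$ becomes $w_1^jw_2^{j+k}$, so that as $(j,k)$ ranges over $\{j\ge 0,\ k\ge -j-1\}$ the pair $(j,\,j+k)$ ranges over $\{m\ge 0,\ \ell\ge -1\}=\bbN_0\times\{\ell\ge -1\}$. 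First I would make this unitary identification precise: I would write down the explicit unitary $U\colon L^2(d_b(\bbH))\to L^2(\p\bbD\times\p\bbD)$ induced by $\Phi$ (it is essentially composition with $\Phi$ together with a Jacobian-type weight coming from the factor $|z_2|$ in the surface measure, chosen precisely so that $U$ is an isometry and carries $H^2(\bbH)$ onto the closed subspace of $L^2(\p\bbD\times\p\bbD)$ spanned by $\{w_1^mw_2^\ell: m\ge 0,\ \ell\ge -1\}$). This should be a routine consequence of the definitions in Section~\ref{hardy-space}.

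Next, I would observe that the Szeg\H o projection $S$ is unitarily equivalent, via $U$, to the orthogonal projection $\widetilde S$ of $L^2(\p\bbD\times\p\bbD)$ onto $\overline{\operatorname{span}}\{w_1^mw_2^\ell: m\ge 0,\ \ell\ge -1\}$. This latter projection tensor-splits: $\widetilde S = P_+ \otimes Q$, where $P_+$ is the classical Szeg\H o (Riesz) projection on $L^2(\p\bbD)$ onto the Hardy space $H^2(\bbD)=\overline{\operatorname{span}}\{w^m:m\ge 0\}$, and $Q$ is the projection of $L^2(\p\bbD)$ onto $\overline{\operatorname{span}}\{w^\ell:\ell\ge -1\}$. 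But $Q$ is just $\bar w\, P_+\, (\,\cdot\, w)$ — multiplication by the unimodular function $w^{-1}$ conjugates $Q$ to $P_+$ — so $Q$ is bounded on $L^p(\p\bbD)$ for all $p\in(1,\infty)$ with the same norm as $P_+$, by the classical M.~Riesz theorem on the boundedness of the Hilbert transform / Riesz projection. Since a tensor product of $L^p$-bounded operators on the two factors is $L^p$-bounded on the product (with norm at most the product of the norms — this is Fubini applied one variable at a time), $\widetilde S = P_+\otimes Q$ is bounded on $L^p(\p\bbD\times\p\bbD)$ for every $p\in(1,\infty)$.

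Finally, I would transfer this back: because $U$ is induced by a change of variables with a weight that is bounded above and below (the weight is a fixed power of $|z_2|$, and on $d_b(\bbH)$ we have $|z_2|=1$, so in fact the weight is constant on the distinguished boundary, making $U$ an isometry on $L^p$ for \emph{every} $p$ simultaneously up to a fixed constant), the $L^p$-boundedness of $\widetilde S$ is equivalent to the $L^p$-boundedness of $S=U^{-1}\widetilde S\, U$. This yields Theorem~\ref{main-2}. The one point that requires genuine care — the ``main obstacle'' — is verifying at the level of $L^p$ (not just $L^2$) that $\Phi$ really does induce an isometry of $L^p(d_b(\bbH))$ onto $L^p(\p\bbD\times\p\bbD)$ with the stated Fourier/monomial correspondence: one must check that the surface measure on $d_b(\bbH)=\p\bbD\times\p\bbD$ used to define $L^p(d_b(\bbH))$ matches, under $\Phi$, the product arc-length measure on $\p\bbD\times\p\bbD$ up to a constant, and that the index shift $k\mapsto j+k$ is exactly the reindexing produced by $\Phi$; once this bookkeeping is done, the rest is the classical one-variable Riesz projection theorem plus an elementary tensorization argument, and in particular there is no restriction on $p$ beyond $1<p<\infty$.
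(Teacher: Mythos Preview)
Your proof is correct and exploits the same key observation as the paper --- that the Fourier symbol of $S$ factors as a product $\chi_{\{j\ge 0\}}\cdot\chi_{\{j+k+1\ge 0\}}$ --- but implements it by a somewhat different (and arguably more elementary) mechanism. The paper views $S$ directly as the Fourier multiplier on $\bbT^2$ with symbol $m(j,k)=\tfrac{1+\sgn(j)}{2}\cdot\tfrac{1+\sgn(j+k+1)}{2}$, extends this symbol to $\widetilde m(\xi,\eta)$ on $\bbR^2$, observes that the corresponding multiplier operator on $L^p(\bbR^2)$ is bounded for all $1<p<\infty$ by the Hilbert transform theory, and then invokes the transference principle (de Leeuw / Grafakos) to pass from $\bbR^2$ back to $\bbT^2$. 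You instead remain on the torus throughout: the boundary restriction of $\Phi^{-1}$, namely $(e^{i\theta},e^{i\gamma})\mapsto(e^{i(\theta-\gamma)},e^{i\gamma})$, is a measure-preserving automorphism of $\bbT^2$, hence an isometry of every $L^p$, and it conjugates $S$ to the tensor product $P_+\otimes Q$ of the Riesz projection in the first variable with a one-step shift of the Riesz projection in the second; the $L^p$-boundedness of a tensor product then follows by Fubini. Your route avoids the transference machinery and ties in nicely with the biholomorphism theme running through the paper (cf.\ Proposition~\ref{hardy-iso}); the paper's route is the standard harmonic-analysis packaging of the same idea. One small cleanup: your discussion of a possible ``Jacobian-type weight'' is unnecessary --- on $d_b(\bbH)$ the measure is just normalized Haar measure on $\bbT^2$, and the boundary map is measure-preserving, so $U$ is an isometry on $L^p$ for all $p$ with no weight at all.
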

In light of Theorem \ref{main-1} this result about the Szeg\H o projection is surprising and unexpected. As we will see in detail in Section \ref{hardy-space}, the space $H^2(\bbH)$ is, in a suitable sense, the limit space of the weighted Bergman spaces $A^2_\nu(\bbH)$ for $\nu\to-1$. The space $H^2(\bbH)$ turns out to be modeled only on the distinguished boundary $d_b(\bbH)$ of $\bbH$ and not on the whole topological boundary. Therefore, we loose sight of the origin $(0,0)$, which is the most pathological point of the topological boundary of $\bbH$. 

Finally, the Dirichlet space $\mathcal D(\bbH)$ can be described in short as the space of functions of the form 
\begin{equation*}
f(z_1,z_2)=\sum_{j=0}^{+\infty}\sum_{k=-j}^{+\infty}a_{jk}z_1^jz_2^k
\end{equation*}
endowed with the norm
\begin{equation*}
\|f\|^2_{\mathcal D}:=\sum_{j=0}^{+\infty}\sum_{k=-j}^{+\infty}(j+1)(j+k+1)|a_{jk}|^2.
\end{equation*}
In Section \ref{dirichlet-space} we motivate the definition of this space and we see why it corresponds to the value $\nu=-2$ in our family of spaces. The following theorem provide a motivation of why the space $\mathcal D$ is a candidate Dirichlet space on $\bbH$.
\begin{theorem}\label{main-3}
 There exists a surjective isometry from $\mathcal D(\bbH)$ onto $\mathcal D(\bbD\times\bbD)$, the Dirichlet space on the bidisc.
\end{theorem}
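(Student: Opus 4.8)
The plan is to construct the isometry explicitly on the level of monomials, using the biholomorphism $\Phi\colon\bbD\times\bbD^*\to\bbH$, $(w_1,w_2)\mapsto(w_1w_2,w_2)$, which pulls back $\bbH$-monomials to $(\bbD\times\bbD)$-monomials. First I would write down how $\Phi^*$ acts: a basis element $z_1^jz_2^k$ of $\cD(\bbH)$ is sent to $(w_1w_2)^jw_2^k=w_1^jw_2^{j+k}$. As $j$ ranges over $\bbN_0$ and $k$ over $\{-j,-j+1,\dots\}$, the pair $(j,j+k)$ ranges over all of $\bbN_0\times\bbN_0$, so $\Phi^*$ sets up a bijection between the canonical monomial bases of $\cD(\bbH)$ and of the space of holomorphic functions on $\bbD\times\bbD$ spanned by $\{w_1^mw_2^n : m,n\ge 0\}$. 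Thus the only thing to check is that the weights match: with $m=j$, $n=j+k$, the $\cD(\bbH)$-weight $(j+1)(j+k+1)$ becomes exactly $(m+1)(n+1)$, which is (up to the usual normalization) the weight defining $\cD(\bbD\times\bbD)$ as the tensor product of two one-variable Dirichlet spaces. Hence the linear map $U$ determined by $U(z_1^jz_2^k)=w_1^jw_2^{j+k}$ is a bijective isometry of the monomial bases, and therefore extends to a surjective isometry of the completed Hilbert spaces.

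The steps, in order, are: (i) recall/fix the definition of $\cD(\bbD\times\bbD)$ as the Hilbert space of holomorphic $g(w_1,w_2)=\sum_{m,n\ge0}b_{mn}w_1^mw_2^n$ with $\|g\|^2_{\cD(\bbD\times\bbD)}=\sum_{m,n\ge0}(m+1)(n+1)|b_{mn}|^2$ (this is the standard bidisc Dirichlet space; if the paper's normalization of the one-variable weights differs, absorb the difference into the constant $C_\nu$-type normalization used throughout); (ii) define $U$ on finite linear combinations of monomials by $U\big(\sum a_{jk}z_1^jz_2^k\big)=\sum a_{jk}w_1^jw_2^{j+k}$ and observe, from the index computation above, that this is a well-defined bijection between the dense subspaces of finite monomial sums; (iii) verify $\|Uf\|^2_{\cD(\bbD\times\bbD)}=\sum_{j,k}(j+1)(j+k+1)|a_{jk}|^2=\|f\|^2_{\cD(\bbH)}$, so $U$ is isometric on a dense subspace; (iv) extend $U$ by continuity to all of $\cD(\bbH)$ and note that surjectivity is inherited because the image contains a dense set (all finite monomial sums in $w_1,w_2$) and $U$ has closed range, being an isometry.

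There is essentially no hard analytic obstacle here once the right change of variables is in place; the content is entirely bookkeeping about index sets and weights. The one point that requires care — and the place where a reader could get confused — is making sure the summation range $k\ge -j$ in the definition of $\cD(\bbH)$ is exactly what is needed for $(j,j+k)$ to sweep out $\bbN_0\times\bbN_0$ with no overlaps and no omissions: if the range were $k\ge -j-1$ (as for $H^2(\bbH)$) one would get the "extra" negative powers that obstruct such a clean identification, which is precisely why $\nu=-2$ rather than $\nu=-1$ is the Dirichlet value. I would therefore state explicitly that the map $(j,k)\mapsto(j,j+k)$ is a bijection $\{(j,k):j\ge0,\ k\ge -j\}\to\{(m,n):m\ge0,\ n\ge0\}$, with inverse $(m,n)\mapsto(m,n-m)$, and then the rest of the argument is immediate. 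A short remark connecting $U$ to the pullback $f\mapsto (\det\text{-twisted})\,f\circ\Phi$ can be added to explain why this map is "natural" and not merely a formal basis identification, but it is not logically necessary for the proof.
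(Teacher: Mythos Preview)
Your proposal is correct and follows essentially the same approach as the paper: the paper defines the isometry as $f\mapsto f\circ\Phi$ with $\Phi(w_1,w_2)=(w_1w_2,w_2)$, computes the effect on the Laurent expansion via the index substitution $(j,k)\mapsto(j,j+k)$, and checks that the weights $(j+1)(j+k+1)$ become $(m+1)(n+1)$, exactly as you do. Your write-up is slightly more explicit about the index bijection and the density/extension step, but the argument is the same.
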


All the function spaces we consider are reproducing kernel Hilbert spaces with kernel $K_{\nu}$ which satisfies the estimate
\begin{equation}\label{kernel-estimate}
|K_{\nu}((z_1,z_2),(w_1,w_2))|\leq c_\nu |z_2\overline w_2|^{-1-\lceil\frac\nu2\rceil}|1-(z_1\overline w_1)/(z_2\overline w_2)|^{-(\nu+2)}|1-z_2\overline w_2|^{-(\nu+2)}
\end{equation}
for any $\nu>-2$ and $((z_1,z_2),(w_1,w_2))\in\bbH\times\bbH$ and for some positive constant $c_\nu$. Here $\lceil\cdot\rceil$ denotes the ceiling function. Because of this estimate we say that our family of spaces can be considered an analogue of a classical family of function spaces on the unit disc (or  the unit ball). If we consider the case of the unit disc, a well-known family of function spaces is the family of reproducing kernel Hilbert spaces associated to the family of kernels
\begin{equation}\label{kernel-disc}
k_\nu(z,w)=\frac{1}{(1-z\overline w)^{2+\nu}},
\end{equation}
where $\nu>-2$. This family of spaces is known  in the literature with several names, such as Besov-Sobolev spaces or Dirichlet type spaces. We point out that for $\nu>-1$, the kernels $k_\nu$'s identify the classical weighted Bergman spaces on the unit disc, the case $\nu=-1$ is the case of the Hardy space, the case $-2<\nu<-1$ identifies the weighted Dirichlet spaces, whereas the case $\nu\to-2$ identifies the Dirichlet space and a logarithmic singularity arises in \eqref{kernel-disc}. For more about these spaces we refer the reader to \cite{Wu,Zhu, ARSB}.

\medskip

The paper is organized as follows. In Section \ref{weighted-bergman} we introduce the weighted Bergman spaces $A^2_\nu(\bbH)$ and we prove Theorem \ref{main-1}. In Section \ref{hardy-space} we deal with the space $H^2(\bbH)$ and we prove Theorem \ref{main-2}. In Section \ref{dirichlet-space} we introduce the spaces $\mathcal D_\nu(\bbH)$ and $\mathcal D(\bbH)$ and we prove Theorem \ref{main-3}. We conclude with some final remarks in Section \ref{final-remarks}.

\medskip

Given two positive quantities $X,Y$ we write $X\approx Y$ meaning that there exist two positive constants $c,C$
 such that $cX\leq Y\leq CX$, whereas we write $X\lesssim Y$ if there exists a positive constant $C$ such that $X\leq CY$.

\section{Weighted Bergman spaces
}\label{weighted-bergman}

Let $\mathbb D$ denote the unit disc in the complex plane and let $H^2(\bbD)$ be the classical Hardy space, that is, the space of holomorphic functions on $\bbD$ such that 
\begin{equation*}
\|f\|^2_{H^2(\bbD)}:=\sup_{0<r<1}\frac{1}{2\pi}\int_0^{2\pi}|f(re^{i\theta})|^2\, d\theta<+\infty.
\end{equation*}
Given a function $f\in H^2(\bbD)$ it is well--known  (see, for instance, \cite{zhu-t}) that 
\begin{equation}\label{hardy-limit}
\|f\|^2_{H^2(\bbD)}=\lim_{\nu\to-1^+}\|f\|^2_{A^2_{\nu}(\bbD)},
\end{equation}
where $A^2_\nu(\bbD)$ denotes the weighted Bergman space on $\bbD$ with norm
\begin{equation}\label{norm-disc}
\|f\|^2_{A^2_\nu(\bbD)}=(\nu+1)\int_{\bbD}|f(z)|^2(1-|z|^2)^{\nu}\, dz.
\end{equation}

Given $K_{\bbD}(z,w)=(1-\overline zw)^{-2}$, the reproducing kernel of the unweighted Bergman space $A^2(\bbH)$, notice that the weight in \eqref{norm-disc} satisfies
\begin{equation*}
(1-|z|^2)^{\nu}= K^{-\frac\nu2}_{\bbD}(z,z)\approx \delta^{\nu}(z),
\end{equation*}
where $\delta(z)$ is the distance of $z\in\bbD$ from the topological boundary $\partial\bbD$. 

In light of the above, for $\nu>-1$ we define $A^2_\nu(\bbH)$ as the space of holomorphic functions on $\bbH$ such that 
\begin{equation}\label{weighted-norm}
 \|f\|^2_{A^2_\nu(\bbH)}:=C_\nu\int_{\bbH}|f(z)|^2 K^{-\frac\nu2}(z,z)\, dz<+\infty,
\end{equation}  
where $z=(z_1,z_2)$, $K(z,w)$ is the reproducing kernel of the unweighted Bergman space $A^2(\bbH)$, $dz$ denotes the Lebesgue measure of $\bbC^2$ and $C_\nu$ is a normalization constant to be determined.  In Section \ref{hardy-space} we will define the Hardy space $H^2(\bbH)$ as the limit of the $A^2_\nu(\bbH)$ spaces as $\nu\to-1$ in the sense of \eqref{hardy-limit}.

The unweighted Bergman space $A^2(\bbH)$ corresponds to the case $\nu=0$ but we drop the subscript in the notation $A^2_0(\bbH)$. In this case we set 
\begin{equation*}
\|f\|^2_{A^2(\bbH)}=\frac{2}{\pi^2}\int_{\bbH}|f(z_1,z_2)|^2\, dz,
\end{equation*}
that is, we fix the constant $C_\nu=C_0$ in \eqref{weighted-norm} to be $2/\pi^2$. With such a normalization it is easily verified that $\{z_1^jz_2^k: j\geq0, j+k+1\geq0\}$ is an orthogonal basis for $A^2(\mathbb H)$ and that 
\begin{equation*}
\|z_1^jz_2^k\|^2_{A^2}=\frac{2}{(j+1)(k+j+2)}.
\end{equation*}

We recall that if $\mathcal H$ denotes a reproducing kernel Hilbert space and $\{\varphi_n\}_{n}$ is an orthonormal basis for $\mathcal H$, then the reproducing kernel $K_{\mathcal H}$ is given by the sum $K_{\mathcal H}=\sum_{n}\varphi_n\overline\varphi_{n}$. Therefore, the Bergman kernel of $\bbH$ is given by
\begin{align*}
 \begin{split}
  K(z,w)&=\frac{1}{2}\sum_{j=0}^{+\infty}\sum_{k=-j-1}^{+\infty}(j+1)(k+j+2)(z_1\overline w_1)^j(z_2\overline w_2)^k=\frac{1}{2 z_2\overline w_2 (1-\frac{z_1\overline w_1}{z_2\overline w_2})^2(1-z_2\overline w_2)^2}.
   \end{split}
\end{align*}
In the general case $\nu>-1$ we set
\begin{equation}\label{mu}
d\mu_\nu(z):= C_\nu K^{-\frac\nu 2}(z,z)\, dz= C_\nu 2^\frac\nu2 |z_2|^\nu(1-|z_1/z_2|^2)^\nu(1-|z_2|^2)^{\nu}\, dz
\end{equation}
where 
\begin{equation}\label{Cnu}
C_\nu=\frac{(\nu+1)\Gamma(\frac32\nu+3)}{2^{\frac\nu2}\pi^{2}\Gamma(\nu+1)\Gamma(\frac\nu2+2)}.
\end{equation}
Notice that,
\begin{align*}
 \int_\bbH K^{-\frac\nu 2}(z,z)\, dz&= 2^\frac\nu2\int_{\bbH}|z_2|^{\nu}(1-|z_1/z_2|^2)^{\nu}(1-|z_2|^2)^{\nu}\, dz\\
 &=2^{\frac\nu2}4\pi^2\int_0^1\int_0^\rho r\rho^{\nu+1}(1-(r/\rho)^2)^\nu(1-\rho^2)^\nu\, drd\rho\\
 &=2^{\frac\nu2}4\pi^2\int_0^1\int_0^1 r\rho^{\nu+3}(1-(r/\rho)^2)^\nu(1-\rho^2)^\nu\, drd\rho\\
 &=2^\frac\nu2\pi^2\frac{\Gamma(\nu+1)\Gamma(\frac\nu2+2)}{(\nu+1)\Gamma(\frac32\nu+3)},
\end{align*}
where the last equality follows from from the identity
\begin{equation*}
\int_0^1 (1-x)^a x^b\,dx=\frac{\Gamma(a+1)\Gamma(b+1)}{\Gamma(a+b+2)}
\end{equation*}
whenever $\Re a,\Re b>-1$. Thus, we deduce from \eqref{mu} and \eqref{Cnu} that
\begin{equation*}
\int_{\mathbb H}\,d\mu_{\nu}(z)=1
\end{equation*}
for all $\nu>-1$.
In conclusion, the spaces $A^2_\nu(\bbH)$ are defined as
\begin{equation*}
A^2_\nu(\bbH)=\Big\{f\in \Hol(\bbH): \|f\|^2_{A^2_\nu}:=\int_{\bbH}|f(z_1,z_2)|^2\, d\mu_\nu(z)<+\infty\Big\}.
 \end{equation*}

\begin{remark}\label{remark}\emph{
If $\nu\leq-1$ we have $\int_\bbH d\mu_\nu(z)=+\infty$ and the space $A^2_\nu(\bbH)$ is empty. In fact, $\|z_1^jz_2^k\|_{A^2_\nu}=+\infty$ for any non-negative integer $j$ and any integer $k$.  Hence, exploiting the uniform convergence of the Laurent expansion $f=(z_1,z_2)=\sum_{j=0}^{+\infty}\sum_{k=-\infty}^{+\infty}a_{jk}z_1^jz_2^k
$
of any function $f\in\Hol(\bbH)$, we conclude that $f$ does not belong to any $A^2_\nu(\bbH)$. Hence, $A^2_\nu(\bbH)=\emptyset$.}
\end{remark}

\medskip

We now recall the following proposition, proved in \cite{E-thesis}, about the diagonal behavior of the kernel $K$.
\begin{prop}[\cite{E-thesis}] The following facts hold true.
\begin{enumerate}
             \item[$(i)$] Let $\delta(z)$ be the distance  of $z$ to $\partial\bbH$, the topological boundary of $\bbH$. Then, 
             \begin{equation*}
             K(z,z)\approx \delta(z)^{-2}
             \end{equation*}
             as $z$ tends to the origin.
\medskip
\item[$(ii)$] Let $p$ be any point in the distinguished boundary $d_b(\bbH)$.   For any number $\beta\in (2,4]$ there exists a path $\gamma: [1/2,1]\to\overline\bbH$ such that $\gamma(1)=p$ and for all $u\in[1/2,1)$,
\begin{equation*}
K(\gamma(u),\gamma(u))\approx \delta(\gamma(u))^{-\beta}.
\end{equation*}
\end{enumerate}

\end{prop}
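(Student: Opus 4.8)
The plan is to read everything off the explicit diagonal kernel. Setting $z=w=(z_1,z_2)$ in the formula for $K$ derived above gives
\begin{equation*}
K(z,z)=\frac{|z_2|^2}{2\,(|z_2|^2-|z_1|^2)^2(1-|z_2|^2)^2}=\frac{|z_2|^2}{2\,(|z_2|-|z_1|)^2(|z_2|+|z_1|)^2(1-|z_2|)^2(1+|z_2|)^2},
\end{equation*}
so the whole proposition amounts to tracking the three competing factors $|z_2|$, $|z_2|-|z_1|$ and $1-|z_2|$. Next I would compute $\delta$ explicitly. The topological boundary splits as $\p\bbH=\Sigma_1\cup\Sigma_2$ with $\Sigma_1=\{|w_1|=|w_2|\le1\}$ and $\Sigma_2=\{|w_1|\le|w_2|=1\}$; projecting $z$ onto $\Sigma_2$ by keeping the first coordinate and moving $z_2$ radially to $\p\bbD$ yields $\operatorname{dist}(z,\Sigma_2)=1-|z_2|$, while minimizing $(|z_1|-\rho)^2+(|z_2|-\rho)^2$ over $\rho=|w_1|=|w_2|$ yields $\operatorname{dist}(z,\Sigma_1)=(|z_2|-|z_1|)/\sqrt2$, with optimal $\rho=(|z_1|+|z_2|)/2<|z_2|<1$, so that this minimizer genuinely lies on $\Sigma_1$. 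Hence $\delta(z)=\min\big((|z_2|-|z_1|)/\sqrt2,\,1-|z_2|\big)$ for every $z\in\bbH$.

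For part $(i)$, as $z\to 0$ the quantity $1-|z_2|$ stays bounded below while $(|z_2|-|z_1|)/\sqrt2\to0$, so $\delta(z)=(|z_2|-|z_1|)/\sqrt2$; moreover $1\pm|z_2|\approx1$ and, since $|z_1|<|z_2|$, $|z_2|+|z_1|\approx|z_2|$. Substituting these into the diagonal formula cancels the factor $|z_2|^2$ and leaves $K(z,z)\approx(|z_2|-|z_1|)^{-2}\approx\delta(z)^{-2}$.

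For part $(ii)$, fix $p=(e^{i\phi_1},e^{i\phi_2})\in d_b(\bbH)$ and, given $\beta\in(2,4]$, set $t=\tfrac{2}{\beta-2}\in[1,+\infty)$. Writing $s=1-u$, I would take $\gamma(u)=\big((1-s^t-s)e^{i\phi_1},(1-s^t)e^{i\phi_2}\big)$ for $u\in[1/2,1)$ and $\gamma(1)=p$; since $0\le s^t\le s\le 1/2$ one has $0\le 1-s^t-s<1-s^t<1$, so $\gamma$ is a continuous path with $\gamma([1/2,1))\subset\bbH$ and $\gamma(1)=p$. Along it $|z_2|-|z_1|=s$ and $1-|z_2|=s^t$, so $\delta(\gamma(u))=\min(s/\sqrt2,s^t)\approx s^t$ (using $t\ge1$), whereas $|z_2|$, $|z_2|+|z_1|$ and $1+|z_2|$ all lie in a fixed compact subinterval of $(0,+\infty)$, which gives $K(\gamma(u),\gamma(u))\approx s^{-2}(s^t)^{-2}=s^{-(2t+2)}$. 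As $t\beta=t\big(2+\tfrac2t\big)=2t+2$, this says precisely $K(\gamma(u),\gamma(u))\approx\delta(\gamma(u))^{-\beta}$, uniformly in $u\in[1/2,1)$ with constants depending only on $\beta$.

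Everything here is elementary; the only real content is the exact formula for $\delta$, where one must check that the nearest point of $\{|w_1|=|w_2|\}$ to $z\in\bbH$ actually sits in $\p\bbH$, and the calibration of the exponent $t$ so that the blow-up rate $s^{-(2t+2)}$ of $K$ matches the rate $s^{-t\beta}$ of $\delta^{-\beta}$. This is exactly where the range $\beta\in(2,4]$ is forced: $\beta=4$ corresponds to $|z_2|-|z_1|\approx1-|z_2|$ along the path, and $\beta\to2^+$ to making $1-|z_2|$ tend to $0$ arbitrarily faster than $|z_2|-|z_1|$.
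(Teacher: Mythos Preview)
Your argument is correct. The paper itself does not prove this proposition; it simply refers the reader to Theorems~3.1.4 and~3.1.5 of \cite{E-thesis}. You have instead supplied a self-contained elementary proof: you compute $\delta(z)=\min\big((|z_2|-|z_1|)/\sqrt2,\,1-|z_2|\big)$ directly, and then in part $(ii)$ you engineer the path by choosing the exponent $t=2/(\beta-2)$ so that $1-|z_2|=s^t$ and $|z_2|-|z_1|=s$ produce exactly the blow-up rate $s^{-(2t+2)}=s^{-t\beta}$. All of the bookkeeping checks out, including the verification that $\min(s/\sqrt2,\,s^t)\approx s^t$ uniformly on $(0,1/2]$ when $t\ge1$, and that the optimal $\rho=(|z_1|+|z_2|)/2$ in the $\Sigma_1$-distance computation indeed satisfies $\rho<1$. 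The only cosmetic point is that at the single endpoint $u=1/2$, $\beta=4$ one gets $|z_1|=0$, but this is harmless since such a point still lies in $\bbH$.
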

\begin{proof}
We refer the reader to Theorems 3.1.4 and 3.1.5 in \cite{E-thesis}.
\end{proof}

We want to prove that the spaces $A^2_\nu(\bbH)$ are reproducing kernel Hilbert spaces and study the regularity of the associated weighted Bergman projection. We need the following elementary lemma. 

\begin{lem} \label{monomial-norm}
Let $\nu>-1$. Then, the monomial $z_1^jz_2^k$ belongs to $A^2_\nu$ if and only if
\begin{equation*}
j\geq0 \quad\text{and}\quad j+k+\frac\nu2+2>0.
\end{equation*}
\end{lem}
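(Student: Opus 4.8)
The plan is to compute the norm $\|z_1^jz_2^k\|^2_{A^2_\nu}$ explicitly in polar coordinates and read off exactly when it is finite. Recall that by \eqref{mu} we have $d\mu_\nu(z)=C_\nu 2^{\nu/2}|z_2|^\nu(1-|z_1/z_2|^2)^\nu(1-|z_2|^2)^\nu\,dz$, so passing to polar coordinates $z_1=re^{i\theta_1}$, $z_2=\rho e^{i\theta_2}$ with $0<r<\rho<1$ we get, up to the positive constant $4\pi^2 C_\nu 2^{\nu/2}$,
\begin{equation*}
\|z_1^jz_2^k\|^2_{A^2_\nu}\approx \int_0^1\int_0^\rho r^{2j+1}\rho^{2k+\nu+1}\bigl(1-(r/\rho)^2\bigr)^\nu(1-\rho^2)^\nu\,dr\,d\rho,
\end{equation*}
valid a priori for $j\geq 0$ an integer, while for $j<0$ or $k$ making the angular integral nonzero only under the stated index constraints the monomial is not even in $\Hol(\bbH)$ in the relevant Laurent sense — more precisely, for $z_1^jz_2^k$ to be holomorphic on $\bbH$ one needs $j\geq 0$, which gives the first condition.

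Next I would evaluate the double integral. Substituting $r=\rho t$ in the inner integral gives $\int_0^1 (\rho t)^{2j+1}(1-t^2)^\nu\rho\,dt = \rho^{2j+2}\int_0^1 t^{2j+1}(1-t^2)^\nu\,dt$, and since $\nu>-1$ and $j\geq 0$ this $t$-integral is a finite positive constant (a Beta integral, convergent because $2j+1>-1$ and $\nu>-1$). Hence
\begin{equation*}
\|z_1^jz_2^k\|^2_{A^2_\nu}\approx \int_0^1 \rho^{2j+2k+\nu+3}(1-\rho^2)^\nu\,d\rho.
\end{equation*}
The factor $(1-\rho^2)^\nu$ is integrable near $\rho=1$ precisely because $\nu>-1$, so the only possible obstruction to convergence is the behavior near $\rho=0$, where the integrand behaves like $\rho^{2j+2k+\nu+3}$. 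This is integrable near $0$ if and only if $2j+2k+\nu+3>-1$, i.e. $2(j+k)+\nu+4>0$, i.e. $j+k+\tfrac\nu2+2>0$, which is the second condition.

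Finally I would assemble the two halves: the monomial $z_1^jz_2^k$ lies in $A^2_\nu(\bbH)$ iff it is holomorphic on $\bbH$ (equivalently $j\geq 0$) and the above integral converges (equivalently $j+k+\tfrac\nu2+2>0$). The only real subtlety — and the step I'd be most careful about — is justifying the holomorphy/membership dichotomy cleanly: one should note that a monomial $z_1^jz_2^k$ with $j<0$ is not holomorphic anywhere on $\bbH$ (since $\bbH$ meets $\{z_1=0\}$), so such monomials are excluded at the outset, and for $j\geq 0$ the computation above is exact. This matches the pattern of the disc kernels \eqref{kernel-disc}: the shift $\nu\mapsto \nu/2+2$ in the $z_2$-direction reflects the factor $|z_2|^\nu(1-|z_2|^2)^\nu$ together with the Jacobian, exactly as one expects from the biholomorphism $\Phi$.
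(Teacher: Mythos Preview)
Your proof is correct and follows essentially the same route as the paper: both pass to polar coordinates, perform the substitution $r=\rho t$ to decouple the double integral, and read off the convergence conditions. The only cosmetic difference is that the paper evaluates the resulting Beta integrals explicitly (obtaining the exact norm formula used later in \eqref{expansion-norm}), whereas you stop at the convergence analysis; your separate justification of $j\geq 0$ via holomorphy on $\bbH\cap\{z_1=0\}$ is a clean touch that the paper leaves implicit.
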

\begin{proof}
From \eqref{mu}, integrating in polar coordinates, we obtain
\begin{align*}
 \|z_1^jz_2^k\|^2_{A^2_{\nu}}&=2^\frac\nu2 C_\nu\int_{\bbH}|z_1|^{2j}|z_2|^{2k} |z_2|^{\nu}(1-|z_1/z_2|^2)^\nu(1-|z_2|^2)^\nu\,dz\\
 &=2^{\frac\nu2+2}\pi^{2}C_\nu \int_0^1\int_0^\rho r^{2j+1}\rho^{2k+\nu+1}(1-(r/\rho)^2)^\nu(1-\rho^2)^\nu\, drd\rho\\
 &=2^{\frac\nu2+2}\pi^2 C_\nu\int_0^1\int_0^1 r^{2j+1}\rho^{2(j+k)+\nu+3}(1-r^2)^{\nu}(1-\rho^2)^{\nu}\, drd\rho\\
 &=\frac{(\nu+1)\Gamma(\nu+1)\Gamma(\frac32\nu+3)}{\Gamma(\frac\nu2+2)}\frac{\Gamma(j+1)\Gamma(j+k+\frac\nu2+2)}{\Gamma(j+\nu+2)\Gamma(j+k+\frac32\nu+3)},
 \end{align*}
 where the last equality holds true if and only if
 \begin{equation*}
 j\geq 0 \quad\text{and}\quad j+k+\frac\nu2+2>0
 \end{equation*}
 as we wished to show.
 \end{proof}
From the previous lemma we conclude that any function $f\in A^2_\nu(\bbH)$ is of the form
 \begin{equation}\label{expansion-f}
  f(z_1,z_2)=\sum_{j=0}^{+\infty}\sum_{k>-j-\frac\nu2-2}a_{jk}z_1^jz_2^k
 \end{equation}
and
\begin{align}\label{expansion-norm}
\begin{split} 
 \|f\|^2_{A^2_\nu}=\frac{(\nu+1)\Gamma(\nu+1)\Gamma(\frac32\nu+3)}{\Gamma(\frac\nu2+2)}\sum_{j=0}^{+\infty}\sum_{k>-j-\frac\nu2-2}\frac{\Gamma(j+1)\Gamma(j+k+\frac\nu2+2)}{\Gamma(j+\nu+2)\Gamma(j+k+\frac32\nu+3)}|a_{jk}|^2
 \end{split}
\end{align}

Moreover, we have the following proposition.
\begin{prop}\label{Bergman-kernel} Let $\nu>-1$. The following properties hold.
\begin{enumerate}
\item[$(i)$] The space $A^2_\nu(\bbH)$ is a reproducing kernel Hilbert space.
\medskip

\item[$(ii)$] An orthonormal basis for $A^2_\nu(\bbH)$ is given by $
 \{z_1^jz_2^k/\|z_1^jz_2^k\|_{A^2_\nu}\}_{(j,k)\in I_{\nu}},
 $
 where $  I_{\nu}=\big\{(j,k): j\geq0, j+k+\frac\nu2+2>0\big\}$.
\medskip

\item[$(iii)$] The reproducing kernel $K_\nu$ of $A^2_{\nu}(\bbH)$ is given by
\begin{equation*}
K_{\nu}(z,w)=\frac{\Gamma(\frac\nu2+2)}{\Gamma(\frac32\nu+3)}\frac{(z_2\overline w_2)^{-2}}{(1-\frac{z_1\overline w_1}{z_2\overline w_2})^{\nu+2}}\sum_{k>-\frac\nu2}^\infty\frac{\Gamma(k+\frac32\nu+1)}{\Gamma(k+\frac\nu2)}(z_2\overline w_2)^k.
\end{equation*}
\end{enumerate}
\end{prop}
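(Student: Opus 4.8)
The plan is to prove the three parts in order, with (i) and (ii) being essentially standard once Lemma \ref{monomial-norm} is available, and (iii) reducing to a resummation of the kernel series.

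\smallskip

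For (i), I would first show that point evaluations on $A^2_\nu(\bbH)$ are bounded. Fix $z^0\in\bbH$ and a closed polydisc $\overline\Delta\subset\bbH$ centred at $z^0$. Since the density $K^{-\nu/2}(\cdot,\cdot)$ of $d\mu_\nu$ is continuous and strictly positive on $\overline\Delta$, it is comparable there to Lebesgue measure, so the sub-mean value inequality for the plurisubharmonic function $|f|^2$ gives $|f(z^0)|^2\lesssim\int_\Delta|f|^2\,dz\lesssim\|f\|^2_{A^2_\nu}$, with constants depending only on $\Delta$. Hence evaluation at $z^0$ is a bounded functional. The same estimate shows that an $L^2(d\mu_\nu)$-Cauchy sequence in $A^2_\nu(\bbH)$ converges locally uniformly on $\bbH$, so its limit is holomorphic; thus $A^2_\nu(\bbH)$ is a closed subspace of $L^2(d\mu_\nu)$, hence complete, hence a reproducing kernel Hilbert space.

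\smallskip

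For (ii), by Lemma \ref{monomial-norm} the monomials $z_1^jz_2^k$ with $(j,k)\in I_\nu$ lie in $A^2_\nu(\bbH)$, and writing $z_1=re^{i\theta_1}$, $z_2=\rho e^{i\theta_2}$ and integrating first in $(\theta_1,\theta_2)$ shows $\langle z_1^jz_2^k,z_1^{j'}z_2^{k'}\rangle_{A^2_\nu}=0$ unless $(j,k)=(j',k')$; so the normalized monomials form an orthonormal system. Completeness is precisely the content of \eqref{expansion-norm}: every $f\in\Hol(\bbH)$ has a Laurent--Taylor expansion $f=\sum_{j\geq0}\sum_{k\in\bbZ}a_{jk}z_1^jz_2^k$ converging locally uniformly on $\bbH$ (only nonnegative powers of $z_1$ since each $z_2$-slice is a disc in $z_1$), and repeating the polar-coordinate computation of Lemma \ref{monomial-norm} together with Tonelli's theorem yields $\|f\|^2_{A^2_\nu}=\sum_{j,k}|a_{jk}|^2\|z_1^jz_2^k\|^2_{A^2_\nu}$; finiteness of the left-hand side forces $a_{jk}=0$ whenever $(j,k)\notin I_\nu$, so $f$ lies in the closed span of the system, as in \eqref{expansion-f}.

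\smallskip

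For (iii), since the normalized monomials form an orthonormal basis, the general RKHS formula gives
\begin{equation*}
K_\nu(z,w)=\sum_{(j,k)\in I_\nu}\frac{(z_1\ov w_1)^{j}(z_2\ov w_2)^{k}}{\|z_1^jz_2^k\|^2_{A^2_\nu}}.
\end{equation*}
Substituting the value of $\|z_1^jz_2^k\|^2_{A^2_\nu}$ from Lemma \ref{monomial-norm} turns this into a constant times a double sum over $I_\nu$ with summand $\frac{\Gamma(j+\nu+2)\Gamma(j+k+\frac32\nu+3)}{\Gamma(j+1)\Gamma(j+k+\frac\nu2+2)}(z_1\ov w_1)^{j}(z_2\ov w_2)^{k}$. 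The key point is that, setting $n=j+k$, the constraint $j+k+\frac\nu2+2>0$ defining $I_\nu$ becomes the $j$-independent condition $n>-\frac\nu2-2$, while $(z_1\ov w_1)^{j}(z_2\ov w_2)^{k}=\big(\tfrac{z_1\ov w_1}{z_2\ov w_2}\big)^{j}(z_2\ov w_2)^{n}$ and the Gamma factors split into a part depending only on $j$ and a part depending only on $n$. Hence the double sum factors as
\begin{equation*}
\Big(\sum_{j\geq0}\frac{\Gamma(j+\nu+2)}{\Gamma(j+1)}\Big(\tfrac{z_1\ov w_1}{z_2\ov w_2}\Big)^{j}\Big)\Big(\sum_{n>-\frac\nu2-2}\frac{\Gamma(n+\frac32\nu+3)}{\Gamma(n+\frac\nu2+2)}(z_2\ov w_2)^{n}\Big).
\end{equation*}
By the generalized binomial series the first factor equals $\Gamma(\nu+2)\big(1-\tfrac{z_1\ov w_1}{z_2\ov w_2}\big)^{-(\nu+2)}$; shifting $n\mapsto k+2$ in the second factor produces the sum $\sum_{k>-\frac\nu2}\frac{\Gamma(k+\frac32\nu+1)}{\Gamma(k+\frac\nu2)}(z_2\ov w_2)^{k}$ together with a factor $(z_2\ov w_2)^{-2}$, and combining the constants using $\Gamma(\nu+2)=(\nu+1)\Gamma(\nu+1)$ collapses the overall constant to $\Gamma(\tfrac\nu2+2)/\Gamma(\tfrac32\nu+3)$, which is exactly the claimed expression. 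All rearrangements are legitimate because on $\bbH\times\bbH$ one has $\big|z_1\ov w_1/(z_2\ov w_2)\big|<1$ and $|z_2\ov w_2|<1$, and Stirling's formula gives $\Gamma(j+\nu+2)/\Gamma(j+1)\sim j^{\nu+1}$ and $\Gamma(n+\frac32\nu+3)/\Gamma(n+\frac\nu2+2)\sim n^{\nu+1}$, so both series converge absolutely and the product of sums is the sum of products.

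\smallskip

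The hard part, to the extent there is one, is entirely in the bookkeeping for (iii): recognizing that after the substitution $n=j+k$ the constraint carving out $I_\nu$ decouples, so that the kernel series genuinely factors into a binomial series in $z_1\ov w_1/(z_2\ov w_2)$ times a one-variable series in $z_2\ov w_2$, and then verifying the (routine) growth estimates on the Gamma quotients that justify the factoring and the index shift. Parts (i) and (ii) are standard given Lemma \ref{monomial-norm} and the local uniform convergence of the Laurent--Taylor expansion.
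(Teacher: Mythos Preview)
Your proof is correct and, for parts (ii) and (iii), follows the paper's argument essentially verbatim: orthogonality and completeness of the monomials via \eqref{expansion-f}--\eqref{expansion-norm}, and the kernel computed by substituting $n=j+k$ so that the double sum factors into the binomial series in $z_1\overline w_1/(z_2\overline w_2)$ times a one-variable series in $z_2\overline w_2$, exactly as the paper does. For (i) you take a slightly different but equally standard route---the sub-mean value inequality on a polydisc with the weight bounded away from zero, plus an explicit check of completeness---whereas the paper bounds $|f(z)|$ directly by Cauchy--Schwarz on the Laurent coefficients; both are routine, and your version has the mild advantage of making closedness of $A^2_\nu(\bbH)$ in $L^2_\nu(\bbH)$ explicit.
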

\begin{proof}
Let $I_{\nu}=\big\{(j,k): j>0, j+k+\frac\nu2+2>0\big\}$ and set
\begin{equation*}
\nu_{jk}:=\frac{\Gamma(j+1)\Gamma(j+k+\frac\nu2+2)}{\Gamma(j+\nu+2)\Gamma(j+k+\frac32\nu+3)},
\end{equation*}
so that, from \eqref{expansion-norm}, we have
\begin{equation*}
\|f\|^2_{A^2_\nu}\approx\sum_{(j,k)\in I_\nu} \nu_{jk}|a_{jk}|^2.
\end{equation*}
Then, for any $z=(z_1,z_2)\in\bbH$,
\begin{align*}
 |f(z_1,z_2)|&\leq \sum_{(j,k)\in I_\nu} |a_{jk}||z_1|^j|z_2|^k\leq \Big(\sum_{(j,k)\in I_\nu} |a_{jk}|^2 \nu_{jk}\Big)^{\frac12}\Big(\sum_{(j,k)\in I_\nu} |z_1|^{2j}|z_2|^{2k} \nu_{jk}^{-1}\Big)^{\frac12}.
\end{align*}
Since $(z_1,z_2)$ is a fixed point in $\bbH$ the latter factor in the above estimate is finite, whereas the first factor is comparable to the $A^2_\nu(\bbH)$ norm of the function $f$. Therefore, if $c_{\nu,z}$ denotes a positive constant depending on $\nu$ and $z=(z_1,z_2)\in\bbH$, we obtain
\begin{equation*}
|f(z_1,z_2)|\leq c_{\nu,z}\|f\|_{A^2_\nu},
\end{equation*}
that is, the point-evaluation functionals are bounded on $A^2_\nu(\bbH)$. Hence, the space $A^2_{\nu}(\bbH)$ is a reproducing kernel Hilbert space. This concludes the proof of $(i)$. The proof of $(ii)$ follows at once from Lemma \ref{monomial-norm} and \eqref{expansion-f}. Consequently, we obtain that the reproducing kernel $K_\nu$ of $A^2_\nu(\bbH)$ is given by
\begin{align}\label{kernel}
\begin{split}
 K_\nu(z,w)&=K_\nu((z_1,z_2),(w_1,w_2))\\
 &=\frac{\Gamma(\frac\nu2+2)}{(\nu+1)\Gamma(\nu+1)\Gamma(\frac32+3)}\sum_{(j,k)\in I_\nu}\frac{\Gamma(j+\nu+2)\Gamma(j+k+\frac32\nu+3)}{\Gamma(j+1)\Gamma(j+k+\frac\nu2+2)}(z_1\overline w_1)^j(z_2\overline w_2)^k\\
&=\frac{\Gamma(\frac\nu2+2)}{\Gamma(\frac32\nu+3)}\frac{(z_2\overline w_2)^{-2}}{(1-\frac{z_1\overline w_1}{z_2\overline w_2})^{\nu+2}}\sum_{k>-\frac\nu2}^\infty\frac{\Gamma(k+\frac32\nu+1)}{\Gamma(k+\frac\nu2)}(z_2\overline w_2)^k
\end{split}
\end{align}
as we wished to show. The last equality follows from the identity
\begin{equation*}
\frac{1}{(1-z)^\lambda}=\frac{1}{\Gamma(\lambda)}\sum_{n=0}^\infty\frac{\Gamma(n+\lambda)}{n!}z^n,
\end{equation*}
which holds for $\Re \lambda>0$ and $|z|<1$. This concludes the proof of $(iii)$ and of the proposition.  
\end{proof}
\begin{remark} \emph{
A closed formula for the kernel $K_\nu$ is immediately deduced. Given $\alpha,\beta$ and $\gamma$, real or complex parameters where $\gamma$ is not a non-positive integer, the Hypergeometric Function is defined as 
\begin{equation*}
F(\alpha,\beta;\gamma;z)= \frac{\Gamma(\gamma)}{\Gamma(\alpha)\Gamma(\beta)}\sum_{k=0}^{+\infty}\frac{\Gamma(k+\alpha)\Gamma(k+\beta)}{\Gamma(k+\gamma)k!}z^k.
\end{equation*}
It is easily seen that the radius of convergence of this series is $1$. For more properties of the Hypergeometric Function we refer the reader, for instance, to \cite{Lebedev}.
From \eqref{kernel} we get that
\begin{equation}\label{kernel-general}
K_{\nu}(z,w)=a_\nu \frac{(z_2\overline w_2)^{-1-\lceil\frac\nu2\rceil}}{(1-\frac{z_1\overline w_1}{z_2\overline w_2})^{\nu+2}}F\Big(\frac32\nu-\Big\lceil\frac\nu2\Big\rceil+2,1;\frac\nu2-\Big\lceil\frac\nu2\Big\rceil+1;z_2\overline w_2\Big)
\end{equation}
where $a_\nu=\frac{\Gamma(\frac\nu2+2)\Gamma(\frac32\nu-\lceil\frac\nu2\rceil+2)}{\Gamma(\frac32\nu+3)\Gamma(\frac\nu2-\lceil\frac\nu2\rceil+1)}$. Notice that when $\nu=2n,n\in\mathbb N_0$ it holds
\begin{equation*}
F\Big(\frac32\nu-\lceil\frac\nu2\rceil+2,1;\frac\nu2-\lceil\frac\nu2\rceil+1;z_2\overline w_2\Big)=(1-z_2\overline w_2)^{-2n-2}.
\end{equation*}
}
\end{remark}

\medskip

Given the weighted Bergman spaces $A^2_\nu(\bbH)$ it is a natural question to investigate the $L^p$ mapping properties of the associated weighted Bergman projection, that is, the operator
\begin{equation*}
P_\nu f(z_1,z_2):=\int_{\bbH} f(w_1,w_2) K_{\nu}((w_1,w_2),(z_1,z_2))\, d\mu_\nu(w),
\end{equation*}
where $d\mu_{\nu}$ is the measure defined in \eqref{mu}. We recall that $P_\nu$ is the orthogonal projection of $L^2_\nu(\bbH)$ onto $A^2_\nu(\bbH)$, where $L^2_\nu(\bbH)$ is the space of square-integrable functions with respect to the measure $d\mu_\nu$.

We now prove the necessary part of Theorem \ref{main-1}. 

\proof[Necessary condition of Theorem \ref{main-1}]
Let us consider the function $f(z_1,z_2)=\overline z_2^{1+\lceil\frac\nu2\rceil}$ which belongs to $L^p_\nu$ for any $p\in(1,\infty)$. From the Laurent series of the kernel \eqref{kernel} we deduce
\begin{align*}
 P_\nu f(z_1,z_2)= d_\nu z_2^{-1-\lceil\nu/2\rceil}
\end{align*}
for some positive constant $d_\nu$. Now,
 \begin{align*}
  \|P_\nu f\|^p_{L^p_\nu}&\approx \int_{\bbH}|z_2|^{\nu-(1+\lceil\nu/2\rceil)p}(1-|z_1/z_2|^{2})^{\nu}(1-|z_2|^2)^{\nu}\, dz\\
  &\approx\int_0^1\int_0^1 r\rho^{\nu-(1+\lceil\nu/2\rceil)p+3}(1-r^2)^\nu(1-\rho^2)^\nu\, drd\rho,
 \end{align*}
and this last integral diverges if $p\geq\frac{4+\nu}{1+\lceil\nu/2\rceil}=2+\frac{\nu-2\lceil\nu/2\rceil+2}{1+\lceil\nu/2\rceil}$. Therefore, $P_\nu$ cannot extend to a bounded operator if $p\geq 2+\frac{\nu-2\lceil\nu/2\rceil+2}{1+\lceil\nu/2\rceil}$. By a standard duality argument we also obtain that $P_\nu$ cannot be bounded if $1<p\leq2-\frac{\nu-2\lceil\nu/2\rceil+2}{\nu-\lceil\nu/2\rceil+3}$. Indeed, if $1/p+1/p'=1$, since $P_\nu$ is self-adjoint, we get
\begin{align*}
 \|P_\nu f\|_{L^{p'}_\nu}&=\sup_{\|g\|_{L^p_\nu}=1}|\left< P_\nu f,g\right>|=\sup_{\|g\|_{L^p\nu}=1}|\left<f,P_\nu g\right>|\leq \sup_{\|g\|_{L^p_\nu}=1}\|P_\nu g\|_{L^p_\nu}\|f\|_{L^{p'}_\nu}.
\end{align*}
Therefore, the $L^p_\nu$-boundedness of $P_\nu$ would imply the $L^{p'}_{\nu}$-boundedness. Hence, $P_\nu$ cannot be bounded if $p\notin\Big(2-\frac{\nu-2\lceil\frac\nu2\rceil+2}{\nu-\lceil\frac\nu2\rceil+3}, 2+\frac{\nu-2\lceil\frac\nu2\rceil+2}{1+\lceil\frac\nu2\rceil}\Big)$. Using the identity $\lfloor\frac\nu2\rfloor=\lceil\frac\nu2\rceil-1$ it is easy to see that this condition coincides with the conditions \emph{(i),(ii)} and \emph{(iii)} as $\nu$ varies. The proof is concluded.
\qed 

\medskip

The sufficient condition in Theorem \ref{main-1} will be proved by means of the classical Schur's lemma, which we now recall. For a proof of this result we refer the reader, for instance, to \cite[Appendix A]{Grafakos}.

\begin{lem}[Schur's lemma]\label{schur}
Let $(\mathcal X, d\mu_\mathcal X)$,$ (\mathcal Y, d\mu_\mathcal Y )$ be two $\sigma$-finite measure spaces. Let $T$ the integral operator given by
\begin{equation*}
Tf(x)=\int_{\mathcal Y}K(x,y)f(y)\, d\mu_\mathcal Y(y),
\end{equation*}
where $K$ is a measurable positive kernel on $\mathcal X\times\mathcal Y$. Let $1<p,p'<+\infty$ be such that $1/p+1/p'=1$. Suppose there exist positive functions $\psi:\mathcal Y\to (0,+\infty)$, $\varphi: \mathcal X\to (0,+\infty)$ such that
\begin{enumerate}[(i)]
 \item$ \int_\mathcal Y K(x,y)\psi^{p'}\, d\mu_\mathcal Y(y)\leq C\varphi(x)^{p'}$;
 \item $\int_{\mathcal X} K(x,y)\varphi^p(x)\, d\mu_{\mathcal X}(x)\leq C\psi(y)^p$.
\end{enumerate}
Then, $T: L^p(\mathcal Y)\to L^p(\mathcal X)$ is bounded.
\end{lem}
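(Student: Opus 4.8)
The plan is to prove the asserted boundedness directly, by a single application of H\"older's inequality followed by Tonelli's theorem, exploiting the positivity of $K$ throughout. Since $|Tf(x)|\le\int_{\mathcal Y}K(x,y)|f(y)|\,d\mu_{\mathcal Y}(y)$, there is no loss in assuming $f\ge0$.

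First I would estimate $Tf(x)$ pointwise, for a fixed $x\in\mathcal X$. The crucial manoeuvre is to factor the integrand as
\[
K(x,y)f(y)=\big[K(x,y)^{1/p'}\psi(y)\big]\cdot\big[K(x,y)^{1/p}\psi(y)^{-1}f(y)\big]
\]
and apply H\"older's inequality with conjugate exponents $p'$ and $p$. Raising the first bracket to the power $p'$ and integrating in $y$ produces exactly $\int_{\mathcal Y}K(x,y)\psi(y)^{p'}\,d\mu_{\mathcal Y}(y)$, which hypothesis (i) bounds by $C\varphi(x)^{p'}$. Hence
\[
Tf(x)\le C^{1/p'}\varphi(x)\Big(\int_{\mathcal Y}K(x,y)\psi(y)^{-p}f(y)^{p}\,d\mu_{\mathcal Y}(y)\Big)^{1/p}.
\]

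Next I would raise this inequality to the $p$-th power, integrate over $\mathcal X$, and interchange the order of integration by Tonelli's theorem, which is legitimate because $K$, $\varphi$, $\psi$ and $f$ are all nonnegative and the measure spaces are $\sigma$-finite. This brings the integral $\int_{\mathcal X}K(x,y)\varphi(x)^{p}\,d\mu_{\mathcal X}(x)$ inside, and hypothesis (ii) bounds it by $C\psi(y)^{p}$. The weight $\psi(y)^{-p}$ left over from the previous step cancels against this $\psi(y)^{p}$, leaving
\[
\|Tf\|^{p}_{L^{p}(\mathcal X)}\le C^{p/p'}\cdot C\int_{\mathcal Y}f(y)^{p}\,d\mu_{\mathcal Y}(y)=C^{p}\|f\|^{p}_{L^{p}(\mathcal Y)},
\]
where I have used the identity $p/p'+1=p$. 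Extracting $p$-th roots gives $\|Tf\|_{L^{p}(\mathcal X)}\le C\|f\|_{L^{p}(\mathcal Y)}$, which is exactly the claim.

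I expect no genuine analytic obstacle in this argument; the only delicate point is the combinatorial bookkeeping in the factorization above, namely splitting the powers $1/p$ and $1/p'$ of $K$ and inserting $\psi$ with opposite exponents in the two brackets, so that hypothesis (i) governs the pointwise step and hypothesis (ii) governs the step after the interchange of integrals. Arranging these exponents to dovetail with the two hypotheses, and checking that the residual powers of $C$ and of $\psi$ recombine into the clean bound $C^{p}$, is essentially the entire content of the proof.
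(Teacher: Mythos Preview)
Your argument is correct and is precisely the standard proof of Schur's lemma. The paper does not actually prove this lemma; it merely cites \cite[Appendix A]{Grafakos}, where the argument given is essentially the one you wrote.
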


We also need the following lemmas. The first two lemmas are elementary estimates of which we omit the proofs.

\begin{lem}\label{cl-estimate}
 Let $\tau\in\ (0,+\infty)$ and let $\rho\in (0,1)$. Then,
 \begin{equation*}
 \int_0^{2\pi}\frac{1}{|1-\rho e^{i\theta}|^{1+\tau}}\, d\theta\approx (1-\rho)^{-\tau}.
 \end{equation*}
\end{lem}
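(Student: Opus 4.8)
The plan is to turn the estimate into a one‑variable integral via the standard identity for $|1-\rho e^{i\theta}|$ followed by a rescaling, and to keep all comparison constants uniform in $\rho\in(0,1)$.

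First I would dispose of the easy regime. If $\rho\in(0,1/2]$, then $1/2\le 1-\rho<|1-\rho e^{i\theta}|\le 1+\rho\le 3/2$ for every $\theta$, so the left‑hand side is comparable to a constant, and so is $(1-\rho)^{-\tau}$ since $1-\rho\approx 1$. It therefore suffices to treat the case $\rho\in[1/2,1)$.

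Next I would use
\begin{equation*}
|1-\rho e^{i\theta}|^{2}=(1-\rho)^{2}+2\rho(1-\cos\theta)=(1-\rho)^{2}+4\rho\sin^{2}(\theta/2),
\end{equation*}
together with the symmetry $\theta\mapsto 2\pi-\theta$, to rewrite the integral as $2\int_0^{\pi}\big((1-\rho)^{2}+4\rho\sin^{2}(\theta/2)\big)^{-(1+\tau)/2}\,d\theta$. On $[0,\pi]$ one has $4\rho\approx 1$ (because $\rho\in[1/2,1)$) and $\sin(\theta/2)\approx\theta$ (for instance $\theta/\pi\le\sin(\theta/2)\le\theta/2$), so this is comparable, uniformly in $\rho$, to $\int_0^{\pi}\big((1-\rho)^{2}+\theta^{2}\big)^{-(1+\tau)/2}\,d\theta$.

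Finally, setting $a=1-\rho\in(0,1/2]$ and substituting $\theta=at$ turns the last quantity into $a^{-\tau}\int_0^{\pi/a}(1+t^{2})^{-(1+\tau)/2}\,dt$. Since $\tau>0$, the integrand is $O(t^{-1-\tau})$ at infinity and hence integrable on $[0,\infty)$, so $\int_0^{\pi/a}(1+t^{2})^{-(1+\tau)/2}\,dt$ is bounded above by $\int_0^{\infty}(1+t^{2})^{-(1+\tau)/2}\,dt<\infty$ and below by $\int_0^{2\pi}(1+t^{2})^{-(1+\tau)/2}\,dt>0$ (using $\pi/a\ge 2\pi$); thus it is $\approx 1$ uniformly in $a$, and the whole expression is $\approx a^{-\tau}=(1-\rho)^{-\tau}$, which combined with the first step proves the lemma. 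There is no genuine obstacle here — the only point requiring care is that every comparison constant be chosen independent of $\rho\in(0,1)$, and the hypothesis $\tau>0$ is used precisely to guarantee convergence of the tail $\int^{\infty}(1+t^{2})^{-(1+\tau)/2}\,dt$.
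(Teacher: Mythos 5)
Your argument is correct and complete. The paper itself omits the proof of this lemma (it is declared an ``elementary estimate''), so there is nothing to compare against; your route --- splitting off the regime $\rho\le 1/2$, using $|1-\rho e^{i\theta}|^{2}=(1-\rho)^{2}+4\rho\sin^{2}(\theta/2)$ together with $\sin(\theta/2)\approx\theta$ on $[0,\pi]$, and rescaling $\theta=(1-\rho)t$ to reduce to the convergent integral $\int_0^\infty(1+t^2)^{-(1+\tau)/2}\,dt$ --- is the standard one, and all constants are visibly uniform in $\rho$. The only cosmetic slip is the strict inequality $1-\rho<|1-\rho e^{i\theta}|$, which is an equality at $\theta=0$; the bound $|1-\rho e^{i\theta}|\ge 1-\rho\ge 1/2$ is what you actually use, so nothing is affected.
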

\begin{lem}\label{cl-estimate2}
 Let $\gamma>-1$, $\delta\in (0,+\infty)$ and $z\in \mathbb D$. Then,
\begin{equation*}
 \int_{\mathbb D}\frac{(1-|w|^2)^\gamma}{|1-z\overline w|^{2+\gamma+\delta}}\,dw\lesssim (1-|z|^2)^{-\delta}. 
 \end{equation*}
\end{lem}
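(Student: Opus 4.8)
The plan is to collapse the two–dimensional integral to a one–dimensional one using polar coordinates together with Lemma \ref{cl-estimate}, and then to control the resulting single integral by splitting its range at the radius $s=|z|$.

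First I would write $w=se^{i\theta}$ with $s\in(0,1)$, $\theta\in[0,2\pi)$, so that $dw=s\,ds\,d\theta$, and note that $|1-z\overline w|$ depends on $\theta$ only through $\theta-\arg z$. Integrating in $\theta$ first gives
\begin{equation*}
\int_{\mathbb D}\frac{(1-|w|^2)^\gamma}{|1-z\overline w|^{2+\gamma+\delta}}\,dw=\int_0^1 s\,(1-s^2)^\gamma\Big(\int_0^{2\pi}\frac{d\theta}{|1-|z|s\,e^{i\theta}|^{2+\gamma+\delta}}\Big)\,ds.
\end{equation*}
Since $\gamma>-1$ and $\delta>0$ we have $2+\gamma+\delta=1+\tau$ with $\tau:=1+\gamma+\delta>0$, so Lemma \ref{cl-estimate} applied with $\rho=|z|s$ shows the inner integral is $\approx(1-|z|s)^{-(1+\gamma+\delta)}$. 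Combining this with the elementary comparison $(1-s^2)^\gamma\lesssim(1-s)^\gamma$ (the implied constant depending on whether $\gamma\ge 0$ or $-1<\gamma<0$) and $s\le 1$, the lemma reduces to the one–variable estimate
\begin{equation*}
J(r):=\int_0^1 (1-s)^\gamma(1-rs)^{-(1+\gamma+\delta)}\,ds\ \lesssim\ (1-r)^{-\delta},\qquad r:=|z|.
\end{equation*}

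To establish this I would split into two cases. If $0\le r\le \tfrac12$ then $1-rs\ge\tfrac12$ on $[0,1]$, so $J(r)\le 2^{1+\gamma+\delta}\int_0^1(1-s)^\gamma\,ds=\tfrac{2^{1+\gamma+\delta}}{\gamma+1}$, which is comparable to $(1-r)^{-\delta}$ since $1-r\in[\tfrac12,1]$. If $\tfrac12<r<1$ I would split $\int_0^1=\int_0^r+\int_r^1$. On $[r,1]$ one has $1-rs\ge 1-r$, hence that part is at most $(1-r)^{-(1+\gamma+\delta)}\int_r^1(1-s)^\gamma\,ds=\tfrac{1}{\gamma+1}(1-r)^{-\delta}$. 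On $[0,r]$ one has $1-rs\ge 1-s$, so the integrand there is at most $(1-s)^{-1-\delta}$ and that part is at most $\int_0^r(1-s)^{-1-\delta}\,ds=\tfrac1\delta\big((1-r)^{-\delta}-1\big)\le\tfrac1\delta(1-r)^{-\delta}$. Adding the two contributions gives $J(r)\lesssim(1-r)^{-\delta}$, and since $(1-r)^{-\delta}\approx(1-r^2)^{-\delta}=(1-|z|^2)^{-\delta}$ the proof is complete.

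I do not anticipate a real obstacle: this is the one–dimensional analogue of the Forelli–Rudin estimates and is entirely elementary. The only points that need a little care are the sign–dependent constant in $(1-s^2)^\gamma\lesssim(1-s)^\gamma$ and the choice of the splitting point $s=r$, which is precisely the value that makes each of the two crude bounds $1-rs\ge 1-r$ and $1-rs\ge 1-s$ efficient on its own subinterval. A self–contained alternative, avoiding Lemma \ref{cl-estimate}, would be to expand $(1-z\overline w)^{-(2+\gamma+\delta)}$ in its binomial series, integrate term by term over $\mathbb D$ against $(1-|w|^2)^\gamma\,dw$ by means of the Beta integral, and then sum the resulting power series $\sum_n c_n|z|^{2n}$ after observing $c_n\approx n^{\delta-1}$ from the asymptotics of ratios of Gamma functions, together with $\sum_{n\ge 1}n^{\delta-1}t^n\approx(1-t)^{-\delta}$; see \cite{Zhu}.
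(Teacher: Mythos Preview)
Your argument is correct. The paper itself omits the proof of this lemma (together with that of Lemma~\ref{cl-estimate}) as an ``elementary estimate'', so there is nothing to compare against; your reduction via polar coordinates and Lemma~\ref{cl-estimate} followed by the split of the radial integral at $s=r$ is a standard and fully valid way to establish this Forelli--Rudin type bound, and the alternative binomial-series route you outline is equally standard.
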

\begin{lem}\label{kernel-estimate2}
Let $\nu>-1$ and let $z\in\mathbb D$. Then,
\begin{equation*}
\Big|F\Big(\frac32\nu-\Big\lceil\frac\nu2\Big\rceil+2, 1;\frac\nu2-\Big\lceil\frac\nu2\Big\rceil+1;z\Big)\Big|\lesssim |1-z|^{-(\nu+2)}. 
\end{equation*}
\end{lem}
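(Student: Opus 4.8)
The plan is to peel the singular factor $(1-z)^{-(\nu+2)}$ off the hypergeometric function by an Euler-type transformation, so that what remains is a hypergeometric function whose defining power series converges \emph{absolutely} on the closed disc and is therefore bounded there. Write $m:=\lceil\nu/2\rceil$, so that the function in question is $F(a,1;c;z)$ with $a=\frac32\nu-m+2$ and $c=\frac\nu2-m+1$. Since $m-1<\frac\nu2\le m$ we have $c\in(0,1]$, so $c$ is never a non-positive integer and the series is well defined and holomorphic on $\bbD$. The naive approach through the Euler integral representation $F(a,b;c;z)=\frac{\Gamma(c)}{\Gamma(b)\Gamma(c-b)}\int_0^1 t^{b-1}(1-t)^{c-b-1}(1-zt)^{-a}\,dt$ is unavailable: with $b=1$ it requires $\Re(c-b)>0$, i.e. $c>1$, whereas in fact $c\le1$. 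This is essentially the only obstacle, and it is bypassed by transforming first.

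The key step is Euler's transformation $F(a,b;c;z)=(1-z)^{c-a-b}F(c-a,c-b;c;z)$, applied with $b=1$. A direct computation gives $c-a-b=-(\nu+2)$, $c-a=-(\nu+1)$ and $c-b=\frac\nu2-m$, whence
\[
F\Bigl(\tfrac32\nu-m+2,\,1;\,\tfrac\nu2-m+1;\,z\Bigr)=(1-z)^{-(\nu+2)}\,G(z),\qquad G(z):=F\Bigl(-(\nu+1),\,\tfrac\nu2-m;\,\tfrac\nu2-m+1;\,z\Bigr).
\]

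It then remains to check that $G$ is bounded on $\overline\bbD$. Its parameters $\alpha=-(\nu+1)$, $\beta=\frac\nu2-m$, $\gamma=\frac\nu2-m+1$ satisfy $\gamma-\alpha-\beta=\nu+2>1>0$ because $\nu>-1$, so the $k$-th Taylor coefficient $\frac{(\alpha)_k(\beta)_k}{(\gamma)_k\,k!}$ of $G$ is $O\bigl(k^{\alpha+\beta-\gamma-1}\bigr)=O\bigl(k^{-(\nu+3)}\bigr)$ by Stirling's formula, which is summable; equivalently, by Gauss's summation theorem the series defining $G$ converges absolutely at $z=1$ and hence uniformly on $\overline\bbD$, so $G$ is continuous, and in particular bounded, on $\overline\bbD$. (In the exceptional cases where an upper parameter of $G$ is a non-positive integer — namely $\alpha$ when $\nu\in\mathbb N_0$, and $\beta$ when $\nu\in2\mathbb N_0$, in which latter case $G\equiv1$ — the series simply terminates and $G$ is a polynomial; in every case $G\in L^\infty(\overline\bbD)$.) Combining the two displays gives $\bigl|F(\tfrac32\nu-m+2,1;\tfrac\nu2-m+1;z)\bigr|\le\|G\|_{L^\infty(\overline\bbD)}\,|1-z|^{-(\nu+2)}$ for all $z\in\bbD$, which is the claim. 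The only part needing a moment's care is the harmless bookkeeping of the terminating-series cases just noted; everything else is an elementary identity.
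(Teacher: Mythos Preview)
Your proof is correct and follows essentially the same route as the paper: apply Euler's transformation $F(a,b;c;z)=(1-z)^{c-a-b}F(c-a,c-b;c;z)$ to extract the factor $(1-z)^{-(\nu+2)}$, then observe that the residual hypergeometric function has $\gamma-\alpha-\beta=\nu+2>0$ and is therefore bounded on $\overline\bbD$. The paper simply cites Lebedev for both steps, whereas you spell out the boundedness via the coefficient asymptotics and handle the terminating cases explicitly; this is welcome extra detail but not a different argument.
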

\begin{proof}The proof follows from elementary properties of the Hypergeometric function. From \cite[Chapter 9.5]{Lebedev} we get
\begin{align*}
\Big|F\Big(\frac32\nu-\Big\lceil\frac\nu2\Big\rceil+2, 1;\frac\nu2-\Big\lceil\frac\nu2\Big\rceil+1;z\Big)\Big|&=|1-z|^{-(\nu+2)}\Big|F\Big(-\nu-1, \frac\nu2-\Big\lceil\frac\nu2\Big\rceil;\frac\nu2-\Big\lceil\frac\nu2\Big\rceil+1;z\Big)\Big|\\
&\lesssim |1-z|^{-(\nu+2)}
\end{align*}
where the last bound follows from estimates of $|F(\alpha, \beta; \gamma; z)|$ when $\Re(\gamma-\alpha-\beta)>0$ (\cite[Chapter 9.3]{Lebedev}). 
\end{proof}

\begin{remark}\emph{From the previous lemma and equation \eqref{kernel-general} we deduce that the kernel $K_\nu$ satisfies the main estimate \eqref{kernel-estimate} for any $\nu>-1$.}
 \end{remark}

We are now ready to prove the sufficient condition in Theorem \ref{main-1}.
\proof[Sufficient condition of Theorem \ref{main-1}]
We want to apply Schur's lemma to the positive kernel $|K_{\nu}|$ since the boundedness of the operator with such kernel would imply the boundedness of $P_\nu$. We choose
\begin{equation*}
 f(z_1,z_2)= (1-|z_1/z_2|^2)^{-\alpha}(1-|z_2|^2)^{-\beta}|z_2|^{-\gamma}
 \end{equation*}
 as test function, where $\alpha,\beta,\gamma$ are positive real parameters to be chosen later. Let $(z,w)=((z_1,z_2),(w_1,w_2))$. Then, from Lemma \ref{kernel-estimate2}  and the change of variables $(w_1/w_2,w_2)\mapsto (z_1,z_2)$ we get
\begin{align}\label{estimate-schur}
\begin{split}
\!\!\int_{\bbH}&  |K_\nu(z,w)|f^p(w_1,w_2)\, d\mu_\nu(w)\!\approx\!\int_{\bbH}|K_\nu(z,w)||w_2|^{\nu-\gamma p}(1-|w_1/w_2|^2)^{\nu-\alpha p}(1-|w_2|^2)^{\nu-\beta p} dw\\
&\lesssim |z_2|^{-1-\lceil\frac\nu2\rceil}\int_{\bbH} |w_2|^{\nu-\lceil\frac\nu2\rceil-\gamma p-1}\frac{(1-|w_1/w_2|^2)^{\nu-\alpha p}(1-|w_2|^2)^{\nu-\beta p}}{|1-\frac{z_1\overline w_1}{z_2\overline w_2}|^{2+\nu}|1-z_2\overline w_2|^{2+\nu}}\, dw\\
&\lesssim |z_2|^{-1-\lceil\frac\nu2\rceil}\bigg(\int_{\mathbb D}\frac{(1-|w_1|^2)^{\nu-\alpha p}}{|1- \overline w_1\frac{z_1}{z_2}|^{2+\nu}}\, dw_1\bigg)\bigg(\int_{\bbD^*}\frac{|w_2|^{\nu-\lceil\frac\nu2\rceil-\gamma p+1}(1-|w_2|^2)^{\nu-\beta p}}{|1-z_2\overline w_2|^{2+\nu}}\, dw_2\bigg).
\end{split}
\end{align}
Now, we require $\nu-\alpha p>-1$ and we apply Lemma \ref{cl-estimate2} with $\gamma=\nu-\alpha p$ and $\delta=\alpha p$ to the first integral obtaining
\begin{align*}
\bigg(\int_{\mathbb D}\frac{(1-|w_1|^2)^{\nu-\alpha p}}{|1- \overline w_1\frac{z_1}{z_2}|^{2+\nu}}\, dw_1\bigg)\lesssim (1-|z_1/z_2|)^{-\alpha p}. 
\end{align*}
For the second integral, applying Lemma \ref{cl-estimate} with $\tau=1+\nu$ and requiring the conditions $\nu-\beta p>-1$ and $\nu-\lceil\nu/2\rceil-\gamma p+2>-1$, we have
\begin{align*}
\int_{\mathbb D^*}|w_2|^{\nu-\lceil\frac\nu2\rceil-\gamma p+1}&\frac{(1-|w_2|^2)^{\nu-\beta p}}{|1-z_2\overline w_2|^{2+\nu}}\, dw_2\lesssim\int_0^1 \rho^{\nu-\lceil\frac\nu2\rceil-\gamma p+2}\frac{(1-\rho)^{\nu-\beta p}}{(1-|z_2|\rho)^{1+\nu}}\, d\rho\\
&=\sum_{k=0}^{+\infty}\frac{\Gamma(k+1+\nu)}{\Gamma(\nu+1)\Gamma(k+1)}|z_2|^{k}\int_0^1\rho^{k+\nu-\lceil\frac\nu2\rceil-\gamma p+2}(1-\rho)^{\nu-\beta p}\, d\rho\\
&=\sum_{k=0}^{+\infty}\frac{\Gamma(k+1+\nu)\Gamma(k+\nu-\lceil\frac\nu2\rceil-\gamma p+3)\Gamma(\nu-\beta p+1)}{\Gamma(\nu+1)\Gamma(k+1)\Gamma(k+2\nu-\lceil\frac\nu2\rceil-\gamma p-\beta p+4)}|z_2|^{k} \\
&\lesssim\sum_{k=0}^{+\infty}\frac{\Gamma(k+\beta p)}{\Gamma(k+1)}|z_2|^{k}\\
&\lesssim(1-|z_2|^2)^{-\beta p}.
\end{align*}
In conclusion, from \eqref{estimate-schur} we get
\begin{equation*}
\int_{\bbH}  |K_\nu(z,w)|f^p(w_1,w_2)\, d\mu_\nu(w)\lesssim (1-|z_1/z_2|^2)^{-\alpha p}(1-|z_2|^2)^{-\beta p }|z_2|^{-\gamma p}
\end{equation*}
if the conditions 
\begin{equation}\label{conditions}
\begin{cases}
\nu-\alpha p>-1\\ \nu-\beta p>-1\\ \nu-\lceil\frac\nu2\rceil-\gamma p+2>-1 \\ 1+\lceil\frac\nu2\rceil-\gamma p<0
\end{cases}
\end{equation}
are satisfied. We obtain the same estimate using the same test function $f$ and $p'$ instead of $p$. Therefore, we can apply Schur's lemma if the conditions \eqref{conditions} are satisfied simultaneously for $p$ and $p'$. This happens if
\begin{align*}
&\alpha\in (0,(\nu+1)/{p})\cap(0,(\nu+1)/{p'}),\qquad \beta\in (0,(\nu+1)/{p})\cap(0,\nu+1/{p'}),\\
&\gamma \in((1+\lceil\nu/2\rceil)/{p},(3+\nu-\lceil\nu/2\rceil)/{p})\cap((1+\lceil\nu/2\rceil)/{p'},(3+\nu-\lceil\nu/2\rceil)/{p'}).
\end{align*}
The parameters $\alpha$ and $\beta$ always vary in a non-trivial range, whereas, in order to have a non-trivial range for $\gamma$, we need that $p\in \Big(2-\frac{\nu-2\lceil\frac\nu2\rceil+2}{\nu-\lceil\frac\nu2\rceil+3}, 2+\frac{\nu-2\lceil\frac\nu2\rceil+2}{1+\lceil\frac\nu2\rceil}\Big)$ and the conclusion follows.

\qed

\medskip
\begin{remark}\emph{
Let $\Phi:\bbD\times\bbD^*\to\bbH$ be the biholomorphism $(w_1,w_2)\mapsto (w_1w_2,w_2)$. 
We observe that the pull-back  of $A^2_\nu(\bbH)$ via $\Phi$, which we denote by $A^2_{\nu}(\bbD\times\bbD^*)$, is the space of holomorphic functions on $\bbD\times\bbD^*$ endowed with the norm
$$
\|f\|^2_{A^2_\nu(\bbD\times\bbD^*)}:= c_\nu\int_{\bbD\times\bbD^*}|f(w_1,w_2)|^2|w_2|^{\nu} (1-|w_1|^2)^\nu(1-|w_2|^2)^{\nu}\, dw<+\infty,
 $$
 where $c_\nu=\frac{(\nu+1)\Gamma(\frac32\nu+3)}{\pi^2\Gamma(\nu+1)\Gamma(\frac\nu2+2)}$. Clearly, the map $f\mapsto \Phi'\cdot f\circ\Phi$, where $\Phi'=\jac_\bbC(\Phi)$, is a surjective isometry from $A^2_\nu(\bbH)$ onto $A^2_\nu(\bbD\times\bbD^*)$. We point out that for $\nu\in(-1,0]$ the functions in $A^2_\nu(\bbD\times\bbD^*)$ are actually holomorphic on $\bbD\times\bbD$.  \newline
\indent  If $p\neq2$ we consider the weighted space $A^p_\nu(\bbD\times\bbD^*,|w_2|^2)$, the space of holomorphic functions on $\bbD\times\bbD^*$ with norm
 \begin{equation*}
 \|f\|_{A^p_\nu(\bbD\times\bbD^*,|w_2|^2)}=c_\nu\int_{\bbD\times\bbD^*}|f(w_1,w_2)|^p|w_2|^{\nu+2} (1-|w_1|^2)^\nu(1-|w_2|^2)^{\nu}\, dw<+\infty.
 \end{equation*}
 Then, $f\mapsto f\circ\Phi$ is a surjective isometry from $A^p_\nu(\bbH)$ onto $A^p_\nu(\bbD\times\bbD^*,|w_2|^2)$.
 } 
\end{remark}
\section{A Hardy space on the Hartogs triangle}\label{hardy-space}
In this section we introduce a candidate Hardy space on the Hartogs triangle. If $\Omega=\{ z:\rho(z)<0\}$ is a
smoothly bounded domain in $\bbC^n$, the Hardy space $H^2(\Omega)$ is defined as 
\begin{equation*}
H^2(\Omega) =\big\{ f\in\Hol(\Omega): \sup_{\varepsilon>0}
\int_{\partial\Omega_\varepsilon} |f|^2 d\sigma_\varepsilon <\infty
\big\} \,,
\end{equation*}
where $\Omega_\varepsilon=\{ z:\rho(z)<-\varepsilon\}$ and $d\sigma_\varepsilon$ is the
induced surface measure on $\partial\Omega_\varepsilon$, the topological boundary of $\Omega_\varepsilon$. Then, $H^2(\Omega)$ can be 
identified with a closed subspace of $L^2(\partial\Omega,d\sigma)$ that we
denote by $H^2(\p\Omega)$.  The
Szeg\H o projection is the orthogonal projection 
\begin{equation*}
S_\Omega : L^2(\p\Omega,d\sigma) \to H^2(\p\Omega) \,;
\end{equation*}
see \cite{Stein-holo}. Such a definition of Hardy space is not suitable for the Hartogs triangle. Having in mind \eqref{hardy-limit}, we introduce in this section a Hardy space $H^2(\bbH)$ as limit of the weighted Bergman spaces $A^2_\nu(\bbH)$ as follows. From \eqref{kernel-general} we get 
\begin{align*}
\begin{split}
K_{-1}((z_1,z_2),(w_1,w_2)):= \lim_{\nu=-1} K_{\nu}((z_1,z_2),(w_1,w_2))=\frac{1}{(z_2\overline w_2-z_1\overline w_1)(1-z_2\overline w_2)}.
 \end{split}
\end{align*}
We want $H^2(\bbH)$ to be the reproducing kernel Hilbert space associated to the kernel $K_{-1}$.

For $(s,t)\in (0,1)\times (0,1)$ we set
\begin{equation*}
\bbH_{st}=\Big\{(z_1,z_2)\in\bbC^2:|z_1|/s<|z_2|<t\Big\}\subsetneq \bbH
\end{equation*}
and
we define the Hardy space $H^2(\bbH)$ as 
\begin{equation*}
H^2(\bbH)=\bigg\{f\in\Hol(\bbH):\sup_{(s,t)\in(0,1)\times(0,1)}\frac{1}{4\pi^2}\int_{d_b(\bbH_{st})}|f|^2\, d\sigma_{st}<+\infty\bigg\},
\end{equation*}
where $d\sigma_{st}$ denotes the induced surface measure on $d_b(\bbH_{st})$, the distinguished boundary of $\bbH_{st}$. We endow $H^2(\bbH)$ with the norm
\begin{equation*}
\|f\|^2_{H^2}:=\!\sup_{(s,t)\in(0,1)\times(0,1)}\frac{1}{4\pi^2}\int_{d_b(\bbH_{st})}\!|f|^2\, d\sigma_{st}=\!\sup_{(s,t)\in(0,1)\times(0,1)} \frac{1}{4\pi^2}\int_0^{2\pi}\int_0^{2\pi}\!|f(st e^{i\theta}, t e^{i\gamma})|^2 st^2 d\theta d\gamma.
\end{equation*}

\begin{remark}\emph{
Unlike the classical setting, we point out that a boundary point, the origin, belongs to all the approximating domains $\bbH_{st}$. Also, our definition of $H^2(\bbH)$ is based on the approximating domain $\mathbb H_{st}$, which depends on two parameters $s,t$, but we could have also used the  approximating domain $\bbH_t$ defined as
\begin{equation*}
\bbH_t=\Big\{(z_1,z_2)\in\bbC^2: |z_1|/t<|z_2|<t\Big\}.
\end{equation*}
The resulting space would be different from the one we considered. Hence, several different approaches are available to define a Hardy space on $\bbH$. A further investigation on this matter would be surely interesting. At the moment our goal is to characterize a reproducing kernel Hilbert space with a prescribed kernel that fits in a one-parameter family of reproducing kernels. As we now see, our space $H^2(\bbH)$ has this property.
}
\end{remark}

\begin{prop} 
The Hardy space $H^2(\bbH)$ is a reproducing kernel Hilbert space with reproducing kernel 
\begin{equation*}
K_{-1}((z_1,z_2),(w_1,w_2))=\frac{1}{(z_2\overline w_2-z_1\overline w_1)(1-z_2\overline w_2)}
\end{equation*}
\end{prop}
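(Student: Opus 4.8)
The plan is to compute explicitly the Laurent-type expansion of functions in $H^2(\bbH)$ and to identify the resulting sequence space, then verify directly that the claimed kernel $K_{-1}$ reproduces. First I would fix $f\in\Hol(\bbH)$ and write its Laurent expansion $f(z_1,z_2)=\sum_{j\ge 0}\sum_{k\in\bbZ}a_{jk}z_1^jz_2^k$, valid on the domain $\bbH$ (the restriction $j\ge 0$ comes from holomorphy across $z_1=0$, exactly as in Remark \ref{remark}). Plugging into the definition of the norm and using the orthogonality of the characters $e^{ij\theta}e^{ik\gamma}$ on the torus, the double integral over $d_b(\bbH_{st})$ becomes
\begin{equation*}
\frac{1}{4\pi^2}\int_0^{2\pi}\!\!\int_0^{2\pi}|f(ste^{i\theta},te^{i\gamma})|^2 st^2\,d\theta\,d\gamma=\sum_{j\ge 0}\sum_{k\in\bbZ}|a_{jk}|^2 s^{2j+1}t^{2j+2k+2}.
\end{equation*}
Taking the supremum over $(s,t)\in(0,1)^2$, the factor $s^{2j+1}$ forces $2j+1>0$ (always true) and tends to $1$ as $s\to 1$, while $t^{2j+2k+2}$ is bounded on $(0,1)$ iff $2j+2k+2\ge 0$, i.e. $k\ge -j-1$, and in that case its supremum is $1$. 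Hence $\|f\|_{H^2}^2=\sum_{j\ge 0}\sum_{k\ge -j-1}|a_{jk}|^2$, which recovers the series description of $H^2(\bbH)$ stated in the introduction and shows $H^2(\bbH)$ is isometrically the sequence space $\ell^2$ over the index set $I=\{(j,k):j\ge 0,\ k\ge -j-1\}$.

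Next I would deduce that $H^2(\bbH)$ is a reproducing kernel Hilbert space: for fixed $(z_1,z_2)\in\bbH$ the Cauchy–Schwarz estimate
\begin{equation*}
|f(z_1,z_2)|\le\Big(\sum_{(j,k)\in I}|a_{jk}|^2\Big)^{1/2}\Big(\sum_{(j,k)\in I}|z_1|^{2j}|z_2|^{2k}\Big)^{1/2}
\end{equation*}
shows point evaluations are bounded provided the second factor converges; and it does, since $\sum_{j\ge 0}\sum_{k\ge -j-1}|z_1|^{2j}|z_2|^{2k}=\sum_{j\ge 0}|z_1|^{2j}|z_2|^{-2j-2}(1-|z_2|^2)^{-1}=|z_2|^{-2}(1-|z_2|^2)^{-1}\sum_{j\ge 0}(|z_1|/|z_2|)^{2j}\cdot|z_2|^{-2j}$, wait — more carefully, $\sum_{k\ge -j-1}|z_2|^{2k}=|z_2|^{-2j-2}(1-|z_2|^2)^{-1}$, so the double sum equals $|z_2|^{-2}(1-|z_2|^2)^{-1}\sum_{j\ge 0}(|z_1|^2/|z_2|^2)^{j}$, which is finite because $|z_1|<|z_2|$ on $\bbH$. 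Then the reproducing kernel is $K_{-1}((z_1,z_2),(w_1,w_2))=\sum_{(j,k)\in I}(z_1\overline w_1)^j(z_2\overline w_2)^k$, and summing the geometric series in $k$ first (over $k\ge -j-1$) and then in $j$ gives
\begin{equation*}
\sum_{j\ge 0}(z_1\overline w_1)^j\frac{(z_2\overline w_2)^{-j-1}}{1-z_2\overline w_2}=\frac{1}{(z_2\overline w_2)(1-z_2\overline w_2)}\sum_{j\ge 0}\Big(\frac{z_1\overline w_1}{z_2\overline w_2}\Big)^j=\frac{1}{(z_2\overline w_2-z_1\overline w_1)(1-z_2\overline w_2)},
\end{equation*}
which is exactly the claimed formula, consistent with the limit $\lim_{\nu\to -1}K_\nu$ computed earlier from \eqref{kernel-general}.

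The only genuinely delicate point is the interchange of the infinite summation with the integral over the torus when computing the norm, and the justification that the monotone behavior in $s$ and $t$ of each term $s^{2j+1}t^{2j+2k+2}$ lets one pull the supremum inside the (now nonnegative-term) double series; this is handled by the monotone convergence theorem applied to the partial sums, using that each term is nondecreasing in $s$ and, on the admissible index set, nondecreasing in $t$. A subsidiary point is the direction "$\|f\|_{H^2}<\infty\Rightarrow k\ge -j-1$ for all $j$ with $a_{jk}\ne 0$": here one fixes $j$, lets $t\to 0$ along a sequence, and observes that if some $a_{jk}\ne 0$ with $2j+2k+2<0$ the corresponding term blows up, contradicting finiteness — again using orthogonality to isolate that term. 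Everything else is routine geometric-series manipulation.
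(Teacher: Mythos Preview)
Your proposal is correct and follows essentially the same approach as the paper: compute the norm of a general holomorphic function via its Laurent expansion and orthogonality on the torus, identify the admissible index set $I=\{(j,k):j\ge 0,\ k\ge -j-1\}$, use Cauchy--Schwarz to verify boundedness of point evaluations, and sum the resulting double geometric series to obtain $K_{-1}$. The paper's proof is slightly terser (it computes $\|z_1^jz_2^k\|_{H^2}$ directly and asserts the orthonormal basis, then states the kernel sum without writing out the intermediate geometric-series steps), while you are more explicit about the interchange of sum and integral, the monotonicity argument for the supremum, and the exclusion of indices $k<-j-1$; but there is no substantive difference in method.
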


\begin{proof}
We first notice that $z_1^jz_2^k\in H^2(\bbH)$ if and only if $j\geq0$ and $j+k+1\geq0$. In fact,
 \begin{align*}
  \|z_1^jz_2^k\|^2_{H^2}&=\sup_{(s,t)\in(0,1)\times(0,1)}\frac{1}{
4\pi^2}\int_0^{2\pi}\int_0^{2\pi}|ste^{i\theta}|^{2j}|te^{i\gamma}|^{2k} st^2\, d\theta d\gamma\\
&=\sup_{(s,t)\in(0,1)\times(0,1)}|s^{2j+1}||t^{2(j+k+1)}|=1.
  \end{align*}
  Therefore, setting $I=\{(j,k): j\geq0 \wedge j+k+1\geq0\}$, $\{ z_1^jz_2^k\}_{(j,k)\in I}$ is an orthonormal basis for $H^2(\bbH)$  . Hence, any function $f\in H^2(\bbH)$ is of the form
  \begin{equation}\label{hardy-power-series}
  f(z_1,z_2)=\sum_{j=0}^{+\infty}\sum_{k=-j-1}^{+\infty}a_{jk}z_1^jz_2^k 
  \end{equation}
and 
\begin{equation*}
\|f\|^2_{H^2(\bbH)}=\sum_{j=0}^{+\infty}\sum_{k=-j-1}^{+\infty}|a_{jk}|^2.
\end{equation*}
For any $z=(z_1,z_2)\in\bbH$ it holds
\begin{align*}
|f(z)|&=\Big|\sum_{j=0}^{+\infty}\sum_{j=-k-1}^{+\infty}\!a_{jk}z_1^jz_2^k\Big|\leq \bigg(\sum_{j=0}^{+\infty}\sum_{j=-k-1}^{+\infty}\!|a_{jk}|^2\bigg)^{\frac12}\bigg(\sum_{j=0}^{+\infty}\sum_{j=-k-1}^{+\infty} |z_1|^{2j}|z_2|^{2k}\bigg)^{\frac12}\leq c_z\|f\|_{H^2},
\end{align*}
where $c_z$ is a positive constant depending on the point $z$. It follows that  point-evaluations are bounded functionals on $H^2(\bbH)$, that is, $H^2(\bbH)$ is a reproducing kernel Hilbert space with kernel given by
\begin{align*}
 \sum_{j=0}^{+\infty}\sum_{k=-j-1}^{+\infty} (z_1\overline w_1)^j(z_2\overline w_2)^k=\frac{1}{(z_2\overline w_2-z_1\overline w_1)(1-z_2\overline w_2)}=K_{-1}((z_1,z_2),(w_1,w_2))
\end{align*}
as we wished to show. 

\end{proof}

\begin{prop}\label{Mf limit}
 Let $f$ be in $H^2(\bbH)$. Then, $f\in A^2_{\nu}$ for all $\nu>-1$ and $\|f\|^2_{H^2} =\lim_{\nu\to-1}\|f\|^2_{A^2_\nu}$.
\end{prop}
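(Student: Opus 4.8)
The plan is to reduce everything to a convergence statement about numerical series built from the monomial expansions. By the Proposition characterising $H^2(\bbH)$, write $f=\sum_{(j,k)\in I}a_{jk}z_1^jz_2^k$ with $I=\{(j,k):j\geq0,\ j+k+1\geq0\}$ and $\|f\|^2_{H^2}=\sum_{(j,k)\in I}|a_{jk}|^2<\infty$. For $\nu>-1$, Lemma \ref{monomial-norm} tells us $z_1^jz_2^k\in A^2_\nu$ exactly when $j\geq0$ and $j+k+\tfrac\nu2+2>0$; since $(j,k)\in I$ forces $j+k+\tfrac\nu2+2\geq1+\tfrac\nu2>0$, every such monomial lies in $A^2_\nu$, and \eqref{expansion-norm} gives
\begin{equation*}
\|f\|^2_{A^2_\nu}=\sum_{(j,k)\in I}c_{jk}(\nu)\,|a_{jk}|^2,\qquad
c_{jk}(\nu):=\frac{\Gamma(\nu+2)\,\Gamma(\tfrac32\nu+3)}{\Gamma(\tfrac\nu2+2)}\cdot\frac{\Gamma(j+1)\,\Gamma(j+k+\tfrac\nu2+2)}{\Gamma(j+\nu+2)\,\Gamma(j+k+\tfrac32\nu+3)},
\end{equation*}
using $(\nu+1)\Gamma(\nu+1)=\Gamma(\nu+2)$. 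So it suffices to show (a) $\sum_{(j,k)\in I}c_{jk}(\nu)|a_{jk}|^2<\infty$ for every $\nu>-1$, and (b) this sum tends to $\sum_{(j,k)\in I}|a_{jk}|^2$ as $\nu\to-1^+$.

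First I would record the pointwise limit $\lim_{\nu\to-1^+}c_{jk}(\nu)=1$ for each $(j,k)\in I$: as $\nu\to-1$ one has $\Gamma(\nu+2)\to1$, the pair $\Gamma(\tfrac32\nu+3),\Gamma(\tfrac\nu2+2)$ both tend to $\Gamma(\tfrac32)$ and cancel, $\Gamma(j+\nu+2)\to\Gamma(j+1)$, and the pair $\Gamma(j+k+\tfrac\nu2+2),\Gamma(j+k+\tfrac32\nu+3)$ both tend to $\Gamma(j+k+\tfrac32)$, which is finite and nonzero because $j+k\geq-1$. To pass this through the series I need a uniform dominating constant, that is, some $\delta\in(0,\tfrac12]$ and $C>0$ with $c_{jk}(\nu)\leq C$ for all $\nu\in(-1,-1+\delta]$ and all $(j,k)\in I$. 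I would get this by factoring $c_{jk}(\nu)=P(\nu)\,R_1(j,\nu)\,R_2(j+k,\nu)$ with $P(\nu)=\Gamma(\nu+2)\Gamma(\tfrac32\nu+3)/\Gamma(\tfrac\nu2+2)$, $R_1(j,\nu)=\Gamma(j+1)/\Gamma(j+\nu+2)$ and $R_2(m,\nu)=\Gamma(m+\tfrac\nu2+2)/\Gamma(m+\tfrac32\nu+3)$. Since $\nu+1>0$, the denominator argument strictly exceeds the numerator argument in both $R_1$ and $R_2$, so by monotonicity of $\Gamma$ on $[3/2,\infty)$ one obtains $R_1(j,\nu)\leq1$ for $j\geq1$ and $R_2(m,\nu)\leq1$ for $m\geq0$, while the two boundary indices contribute only the bounded factors $R_1(0,\nu)=1/\Gamma(\nu+2)$ and $R_2(-1,\nu)=\Gamma(\tfrac\nu2+1)/\Gamma(\tfrac32\nu+2)$, each a continuous (hence bounded) function of $\nu$ on the compact interval $[-1,-1+\delta]$; combined with continuity of $P$ on $[-1,-1+\delta]$ this produces the constant $C$.

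With the uniform bound in hand, both assertions follow. For $\nu\in(-1,-1+\delta]$ we get $\sum_{(j,k)\in I}c_{jk}(\nu)|a_{jk}|^2\leq C\|f\|^2_{H^2}<\infty$, so $f\in A^2_\nu$; for an arbitrary fixed $\nu>-1$ the same factorization shows $R_1(\cdot,\nu)$ and $R_2(\cdot,\nu)$ are bounded sequences, hence $\sup_{(j,k)\in I}c_{jk}(\nu)<\infty$ and again $f\in A^2_\nu$. Finally, since $0\leq c_{jk}(\nu)\leq C$ on $(-1,-1+\delta]$, $c_{jk}(\nu)\to1$ pointwise, and $\sum_{(j,k)\in I}|a_{jk}|^2<\infty$, dominated convergence for series (with dominating summable sequence $C|a_{jk}|^2$) yields $\|f\|^2_{A^2_\nu}=\sum_{(j,k)\in I}c_{jk}(\nu)|a_{jk}|^2\to\sum_{(j,k)\in I}|a_{jk}|^2=\|f\|^2_{H^2}$ as $\nu\to-1^+$. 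The one genuinely delicate point is the uniform-in-$(j,k)$ bound on $c_{jk}(\nu)$ near $\nu=-1$, i.e.\ the $\Gamma$-quotient monotonicity estimates and the separate treatment of the two boundary indices; everything else is routine bookkeeping.
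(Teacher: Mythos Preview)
Your argument is correct, and it takes a genuinely different route from the paper's proof. The paper works directly with the integral definition: after the change of variables $(z_1,z_2)\mapsto(z_1z_2,z_2)$ it writes
\[
\|f\|^2_{A^2_\nu}=C_\nu 2^{\nu/2}\int_0^1\!\int_0^1\Big(\tfrac{1}{4\pi^2}\!\int_0^{2\pi}\!\!\int_0^{2\pi}|f(ste^{i\theta},te^{i\gamma})|^2\,st^2\,d\theta d\gamma\Big)(1-s^2)^\nu(1-t^2)^\nu t^{\nu+1}\,ds\,dt,
\]
bounds the inner integral above by $\|f\|^2_{H^2}$ and below by $\|f\|^2_{H^2}-\varepsilon$ on a set $[\delta_1,1]\times[\delta_2,1]$, and evaluates the remaining Beta-type integrals, whose normalizing constants tend to $1$ as $\nu\to-1$. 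In contrast, you pass through the orthogonal expansion, identify $\|f\|^2_{A^2_\nu}=\sum c_{jk}(\nu)|a_{jk}|^2$, verify $c_{jk}(\nu)\to1$ pointwise, and then establish a uniform bound $c_{jk}(\nu)\le C$ near $\nu=-1$ via the factorization $c_{jk}=P(\nu)R_1(j,\nu)R_2(j+k,\nu)$ together with monotonicity of $\Gamma$ on $[3/2,\infty)$, so that dominated convergence for series applies.

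Both proofs are short; the paper's is closer in spirit to the Hardy-space limit formula \eqref{hardy-limit} on the disc and yields the clean intermediate inequality $\|f\|^2_{A^2_\nu}\le(\text{explicit constant}\to1)\,\|f\|^2_{H^2}$ for every $\nu>-1$, while yours is purely coefficient-based and produces, as a byproduct, the uniform estimate $\sup_{(j,k)\in I}\|z_1^jz_2^k\|^2_{A^2_\nu}\le C$ for $\nu$ near $-1$, which is a slightly sharper piece of information about the monomial norms. The only point in your proof that deserves care is exactly the one you flag: the $\Gamma$-monotonicity bound for $R_1,R_2$ and the separate handling of the boundary indices $j=0$ and $j+k=-1$; your treatment of these is fine.
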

\begin{proof}
For any $\nu>-1$ we have
 \begin{align*}
  \|f\|^2_{A^2_\nu}&=C_\nu2^{\frac\nu2}\int_{\bbH}|f(z_1,z_2)|^2|z_2|^\nu(1-|z_1/z_2|^2)^\nu(1-|z_2|^2)^\nu\,dz\\
  &=C_\nu2^{\frac\nu2}\int_{\bbD\times\bbD^*}|f(z_1z_2,z_2)|^2|z_2|^{2+\nu}(1-|z_1|^2)^\nu(1-|z_2|^2)^\nu\, dz\\
  &= C_\nu2^{\frac\nu2}\int_0^1\int_0^1\int_0^{2\pi}\int_{0}^{2\pi}|f(ste^{i\theta},t e^{i\gamma})|^2 (1-s^2)^\nu(1-t^2)^\nu st^{\nu+3}\, d\theta d\gamma ds dt\\
  &\leq \pi^2 2^{\frac\nu2 }C_\nu\|f\|^2_{H^2}  \frac{\Gamma(\frac 12)\Gamma(\nu+1)\Gamma(\frac \nu2+1)\Gamma(\nu+1)}{\Gamma(\nu+\frac 32)\Gamma(\frac 32\nu+2)}.
  \end{align*}
  Taking the limits on both sides and recalling \eqref{Cnu} we obtain that 
  \begin{equation}\label{upper-bound}
  \|f\|^2_{H^2}\geq\lim_{\nu\to-1}\|f\|^2_{A^2_\nu}.
  \end{equation}
Viceversa, given $\nu\in(-1,0)$ and $\varepsilon>0$ there exist $\delta_1,\delta_2>0$ such that
\begin{align*}
 \|f\|^2_{A^2_\nu}&=C_\nu2^{\frac\nu2}\int_{\bbH}|f(z_1,z_2)|^2|z_2|^\nu(1-|z_1/z_2|^2)^\nu(1-|z_2|^2)^\nu\,dz\\
  &=C_\nu2^{\frac\nu2}\int_{\bbD\times\bbD^*}|f(z_1z_2,z_2)|^2|z_2|^{2+\nu}(1-|z_1|^2)^\nu(1-|z_2|^2)^\nu\, dz\\
   &\geq C_\nu2^{\frac\nu2}\int_{\delta_1}^1\int_{\delta_2}^1\int_0^{2\pi}\int_{0}^{2\pi}|f(ste^{i\theta},t e^{i\gamma})|^2 (1-s^2)^\nu(1-t^2)^\nu st^{\nu+3}\, d\theta d\gamma ds dt\\
  &\geq \frac{\pi^2 2^{\frac\nu2} C_\nu}{(\nu+1)^2}(\|f\|^2_{H^2}-\varepsilon)\frac{(1-\delta_1^2)^{\nu+1}(1-\delta_2^2)^{\nu+1}}{(\nu+1)^2}.
\end{align*}
Taking the limit as $\nu\to-1$ we get
\begin{equation*}
\lim_{\nu\to-1}\|f\|^2_{A^2_\nu}\geq  (\|f\|^2_{H^2}-\varepsilon).
\end{equation*}
Since $\varepsilon$ is arbitrary and \eqref{upper-bound} holds, we get that $\|f\|^2_{H^2}\lim_{\nu\to-1}\|f\|^2_{A^2_\nu}$ and the conclusion follows.
\end{proof}

Notice that to any function $f\in H^2(\bbH)$ we can associate a boundary value function $\widetilde f\in L^2(d_b(\bbH))$ defined as 
\begin{equation}\label{boundary-value}
\widetilde f(e^{i\theta}, e^{i\gamma})=\sum_{j=0}^{+\infty}\sum_{k=-j-1}^{+\infty}a_{jk}e^{ij\theta}e^{ik\gamma}.
\end{equation}
The function $\widetilde f$ is a boundary value function for $f$ since, setting
\begin{equation*}
f_{st}(e^{i\theta}, e^{i\gamma}):= f(st e^{i\theta}, t e^{i\gamma}),
\end{equation*}
we have
\begin{align*}
\lim_{(s,t)\to(1,1)}\|\widetilde f- f_{st}\|^2_{L^2(d_b(\bbH))}&=\lim_{(s,t)\to(1,1)}
\sum_{j=0}^{+\infty}\sum_{k=-j-1}^{+\infty}|a_{jk}|^2(1-s^j t^{j+k})^2=0
\end{align*}
by the dominated convergence theorem. Viceversa, any function $g\in L^2(d_b(\bbH)))$ of the form \eqref{boundary-value} automatically extends to a function in $H^2(\bbH)$. Therefore, we can identify the space $H^2(\bbH)$ with the closed space $H^2(d_b(\bbH))\subseteq L^2(d_b(\bbH))$ defined as 
\begin{equation*}
H^2(d_b(\bbH)):=\bigg\{
f(e^{i\theta}, e^{i\gamma})=\sum_{j=0}^{+\infty}\sum_{k=-j-1}^{+\infty}a_{jk}e^{ij\theta}e^{ik\gamma}: \|\{a_{jk}\}\|_{\ell^2}<+\infty\bigg\}.
\end{equation*}
From now on, we call Hardy space both the spaces $H^2(\bbH)$ and $H^2((d_b(\bbH)))$ and we denote by $f$ both the function in $H^2(\bbH)$ and its boundary value in $H^2(d_b(\bbH))$. This should cause no confusion and it will be clear from the context if we are working inside the domain $\bbH$ or on the distinguished boundary $d_b(\bbH)$. Wherever needed we will be more specific about notation and terminology.

We now consider the Szeg\H o projection associated to our Hardy space, that is, the Hilbert space projection operator $S: L^2(d_b(\bbH))\to H^2(d_b(\bbH))$ defined by
\begin{equation*}
f(e^{i\theta}, e^{i\gamma})=\sum_{j,k\in \mathbb Z} a_{jk}e^{ij\theta} e^{ik\gamma}\mapsto Sf(e^{i\theta}, e^{i\gamma}):=\sum_{j=0}^{+\infty}\sum_{k=-j-1}^{+\infty}a_{jk}e^{ij\theta}e^{ik\gamma}.
\end{equation*}

If suitable interpreted, the Szeg\H o projection admits also the integral representation
\begin{align*}
\begin{split}
 f\mapsto Sf(\z_1,\z_2)&=\frac{1}{4\pi^2}\int_{d_b(\bbH)} f(e^{i\theta},e^{i\gamma}) K_{-1}((\z_1,\z_2),(e^{i\theta},e^{i\gamma}))\, d\theta d\gamma
\end{split}
\end{align*}
where $(\z_1,\z_2)$ is any point in $d_b(\bbH)$. However, for our purposes it is enough to consider the Fourier series representation of $Sf$ and we no longer discuss its integral representation.

Since $d_b(\bbH)$ can be identified with the $2$-dimensional torus, from the classical theory of Fourier series on the torus the boundedness of $S$ will follow from the boundedness of the Fourier multiplier operator associated to the multiplier
\begin{equation}\label{szego-proj-2}
m(j,k)= \frac{1+\sgn(j)}{2}\cdot\frac{1+\sgn(j+k+1)}{2}.
\end{equation}
We recall that a Fourier multiplier operator on the $2$-dimensional torus is an operator of the form 
\begin{equation*}
f\mapsto \sum_{(j,k)\in\bbZ^2} m(j,k)\widehat f(j,k) e^{ij\theta}e^{ik\theta},
\end{equation*}
where $\widehat f(j,k)$ denotes the $(j,k)$-Fourier coefficient of the function $f$ and $\{m(j,k)\}_{(j,k)\in\bbZ^2}$ is a bounded sequence.
We now prove Theorem \ref{main-2}.
\proof[Proof of Theorem \ref{main-2}]
The proof follows from classical results in harmonic analysis. Indeed, let us consider on $L^2(\bbR^2)$ the Fourier multiplier operator associated to the multiplier
 \begin{equation*}
 \widetilde m(\xi,\eta)=\frac{1+\sgn(\xi)}{2}\cdot\frac{1+\sgn(\xi+\eta+1)}{2},
 \end{equation*}
 that is, the operator $Tf:= \mathcal F^{-1}(\widetilde m\mathcal F f)$,  where $\mathcal F$ and $\mathcal F^{-1}$ denote the Fourier transform on $\mathbb R^2$ and its inverse respectively.
This operator is well-defined on the class of smooth compactly supported functions and  extends to a bounded operator $T:L^p(\bbR^2)\to L^p(\bbR^2)$ for any $p\in(1,+\infty)$ by standard results on the Hilbert transform. By transference, see \cite{Grafakos-cl}, we obtain that the Fourier multiplier operator associated to $\widetilde m|_{\bbZ^2}$, that is, the Fourier multiplier operators associated to the multiplier defined in \eqref{szego-proj-2}, extends to a bounded operator $L^p(\partial\bbD\times\partial \bbD)\to L^p(\partial \bbD\times\partial \bbD)$ for any $p\in(1,+\infty)$. Therefore, the Szeg\H o projection  extends to a bounded operator $S: L^p(d_b(\bbH))\to L^p(d_b(\bbH))$ for any $p\in(1,+\infty)$ as we wished to show.
\qed

\medskip

We conclude the section observing that there exists a surjective isometry from $H^2(\bbH)$ and $H^2(\bbD\times\bbD)$, the Hardy space on the bidisc. This latter space can be described as the space of functions $f\in\Hol(\bbD\times\bbD)$, $f(z_1,z_2)=\sum_{j,k\geq0} a_{jk}z_1^jz_2^k$ such that 
\begin{equation*}
\|f\|^2_{H^2(\bbD\times\bbD)}:=\sum_{j=0}^{+\infty}\sum_{k=0}^{+\infty}|a_{jk}|^2<+\infty.
\end{equation*}

\begin{prop}\label{hardy-iso}
 Let $\Phi:\bbD\times\bbD^*\to\bbH$ be the biholomorphic map $(w_1,w_2)\mapsto(w_1w_2,w_2)$ and set $\Phi'=\jac_{\bbC}(\Phi)$. Then, the map
 \begin{equation*}
 f\mapsto \Phi'\cdot f\circ\Phi
 \end{equation*}
 is a surjective isometry from  $H^2(\bbH)$  onto $H^2(\bbD\times\bbD)$.
\end{prop}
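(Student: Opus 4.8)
The plan is to prove the statement purely at the level of power series coefficients, using that both $H^2(\bbH)$ and $H^2(\bbD\times\bbD)$ have their squared norms equal to the (unweighted) $\ell^2$-sum of the monomial coefficients. First I would record that the complex Jacobian of $\Phi$ is $\Phi'(w_1,w_2)=\jac_\bbC(\Phi)(w_1,w_2)=w_2$, so that the operator in question is $f\mapsto g$ with $g(w_1,w_2):=w_2\,f(w_1w_2,w_2)$. The combinatorial heart of the matter is the bijection
$I:=\{(j,k):j\ge 0,\ j+k+1\ge 0\}\longrightarrow \mathbb N_0^2$, $(j,k)\mapsto(m,n):=(j,\,j+k+1)$, with inverse $(m,n)\mapsto(m,\,n-m-1)$; everything else is bookkeeping around this map.

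For the isometry direction I would take $f\in H^2(\bbH)$, written as in \eqref{hardy-power-series} as $f(z_1,z_2)=\sum_{(j,k)\in I}a_{jk}z_1^jz_2^k$, substitute $z_1=w_1w_2$, $z_2=w_2$ and multiply by $w_2$, obtaining the formal series $g(w_1,w_2)=\sum_{(j,k)\in I}a_{jk}w_1^jw_2^{\,j+k+1}$. Because $(j,k)\in I$ exactly when $j\ge0$ and $j+k+1\ge0$, the exponents $(m,n)=(j,j+k+1)$ exhaust $\mathbb N_0^2$, so $g$ involves only nonnegative powers of both variables; since $\sum_{(j,k)\in I}|a_{jk}|^2<\infty$ the coefficients are bounded, hence the reindexed series $g(w_1,w_2)=\sum_{m,n\ge0}a_{m,\,n-m-1}w_1^mw_2^n$ converges locally uniformly on $\bbD\times\bbD$ and defines an element of $\Hol(\bbD\times\bbD)$. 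Comparing norms gives $\|g\|^2_{H^2(\bbD\times\bbD)}=\sum_{m,n\ge0}|a_{m,\,n-m-1}|^2=\sum_{(j,k)\in I}|a_{jk}|^2=\|f\|^2_{H^2(\bbH)}$, so the map is a well-defined linear isometry into $H^2(\bbD\times\bbD)$.

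Surjectivity is the same computation run backwards: given $g\in H^2(\bbD\times\bbD)$ with $g(w_1,w_2)=\sum_{m,n\ge0}b_{mn}w_1^mw_2^n$, set $a_{jk}:=b_{j,\,j+k+1}$ for $(j,k)\in I$ and $f(z_1,z_2):=\sum_{(j,k)\in I}a_{jk}z_1^jz_2^k$; then $\|f\|^2_{H^2(\bbH)}=\sum_{m,n\ge0}|b_{mn}|^2<\infty$, so $f\in H^2(\bbH)$, and the first computation shows $\Phi'\cdot f\circ\Phi=g$. Since a linear isometry is injective, the map is a surjective isometry.

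I do not expect a genuine obstacle here; the only two points needing a line of care are the identification $\Phi'=w_2$ and the observation that multiplication by $\Phi'=w_2$ precisely cancels the at-most-simple pole of $f\circ\Phi$ along $\{w_2=0\}$ (this uses the constraint $k\ge -j-1$, i.e. $j+k\ge -1$, characteristic of $H^2(\bbH)$), so that the image lands in $\Hol(\bbD\times\bbD)$ and not merely in $\Hol(\bbD\times\bbD^*)$. As an alternative one could deduce the proposition by combining the analogous $A^2_\nu$-isometry from the remark at the end of Section \ref{weighted-bergman} with the limiting identity of Proposition \ref{Mf limit}, but the direct coefficient argument above is shorter and entirely self-contained.
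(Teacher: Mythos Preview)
Your argument is correct and follows essentially the same route as the paper: compute $\Phi'=w_2$, reindex the Laurent expansion of $f\in H^2(\bbH)$ via the bijection $(j,k)\mapsto(j,j+k+1)$ from $I$ onto $\bbN_0^2$, and observe that both Hardy norms are the $\ell^2$-sum of the coefficients. Your write-up is slightly more explicit (about $\Phi'=w_2$, about convergence on $\bbD\times\bbD$, and about the inverse bijection for surjectivity), but the underlying computation is identical to the paper's.
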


\begin{proof}
Given $f\in H^2(\bbH)$, from \eqref{hardy-power-series} we get 
\begin{equation*}
\Phi'\cdot f\circ \Phi(w_1,w_2)=\sum_{j=0}^{+\infty}\sum_{k=-j-1}^{+\infty}a_{jk}w_1^jw_2^{k+j+1}= \sum_{j=0}^{+\infty}\sum_{k=0}^{+\infty}\widetilde a_{jk}w_1^jw_2^{k},
\end{equation*}
where $\widetilde a_{jk}= a_{j(k-j-1)}$. Hence, $\Phi'\cdot f\circ \Phi\in\Hol(\bbD\times\bbD)$ and 
\begin{equation*}
\|\Phi'\cdot f\circ \Phi\|^2_{H^2(\bbD\times\bbD)}=\sum_{j=0}^{+\infty}\sum_{k=0}^{+\infty}|\widetilde a_{jk}|^2= \sum_{j=0}^{+\infty}\sum_{k=-j-1}^{+\infty}|a_{jk}|^2=\|f\|^2_{H^2(\bbH)}.
\end{equation*}
Therefore, the map $
 f\mapsto \Phi'\cdot f\circ\Phi$ is an isometry.  The map is clearly surjective and the inverse is given by $(\Phi^{-1})'\cdot f\circ \Phi^{-1}$.
\end{proof}

\begin{remark}\emph{
We point out that the surjective isometry $f\mapsto \Phi'\cdot f\circ\Phi$ does not coincide with the expected result. In fact, let $\Omega_1,\Omega_2\subseteq\bbC$ be bounded domains with $\mathcal C^{\infty}$ boundaries  and let $\varphi:\Omega_1\to\Omega_2$ be a biholomorphic mappings. Then, then map $f\mapsto\sqrt{\varphi'}\cdot f\circ\varphi$ is an isometric isomorphism between the Hardy spaces $H^2(\Omega_1)$ and $H^2(\Omega_2)$ (\cite[Chapter 12]{Bell-book}). In order to prove this result one has to prove that $\sqrt{\varphi'}$ is a well-defined holomorphic function on $\Omega_1$. Moreover, since $\Omega_1$ and $\Omega_2$ are smooth bounded domains, there is no ambiguity in defining the Hardy spaces on these domains. In our setting we would expect the isomorphism to be of the form $f\mapsto \sqrt{\Phi'}\cdot f\circ\Phi$ where $\Phi,\Phi'$ are defined as in Proposition \ref{hardy-iso}. However, this is not the case. The  culprit may be the fact that there is no standard definition for the Hardy space on $\bbH$, whereas there is for the Hardy space on $\bbD\times\bbD$. With our definition of $H^2(\bbH)$ the expected isomorphism fails. Moreover, notice that $(\Phi^{-1})'(z_1,z_2)=z_2^{-1}$, thus the factor $\sqrt{(\Phi^{-1})'}$ in the expected isomorphism between $H^2(\bbH)$ and $H^2(\bbD\times\bbD)$ is not even well-defined. 
}
\end{remark}

\section{Weighted Dirichlet and Dirichlet Spaces}\label{dirichlet-space}
In this section we enlarge our family of function spaces defining some weighted Dirichlet spaces $\mathcal D_\nu$, for $\nu\in(-2,-1)$ and a candidate Dirichlet space $\mathcal D$ for $\nu=-2$.

Given $f(z_1,z_2)=\sum_{j=0}^{+\infty}\sum_{k=-\infty}^{\infty} a_{jk}z_1^jz_2^k\in \Hol(\bbH)$, we define the functions
\begin{align*}
 &f_1(z_1,z_2):=\sum_{j=1}^{+\infty}\sum_{k\neq-j}j(j+k)a_{jk}z_1^jz_2^{k-1};\\
 &f_2(z_1,z_2):=\sum_{k\in\mathbb Z\backslash\{0\}}k a_{0k}z_2^{k-1};\\ 
&f_3(z_1,z_2):=\sum_{j=1}^{+\infty}j a_{j(-j)}z_1^j z_2^{-j-1}
 \end{align*}
and define the operator
\begin{align}\label{operators-T}
Tf(z_1,z_2):=|z_2|(1-|z_1/z_2|^2)(1-|z_2|^2)f(z_1,z_2).
\end{align}

Then, we prove the following result. An analogous result in the setting of the polydisc was proved in \cite[Theorem B]{zhu-poly}.

\begin{thm}\label{bergman-derivative}
Let $\nu>-1$. A function $f$ belongs to $A^2_\nu(\bbH)$ if and only if $Tf_j, j=1,2,3$, belong to $L^2_\nu(\bbH)$.
\end{thm}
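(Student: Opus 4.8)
The plan is to reduce everything to a norm computation on monomials, just as in the proof of Lemma \ref{monomial-norm}, and then compare the series expansions of $\|f\|^2_{A^2_\nu}$ and of $\|Tf_j\|^2_{L^2_\nu}$ term by term. First I would observe that $T$ acts diagonally on monomials: by \eqref{mu} and integration in polar coordinates, for any monomial $z_1^mz_2^n$ one has
\begin{equation*}
\|Tz_1^mz_2^n\|^2_{L^2_\nu}=C_\nu 2^{\nu/2}\!\int_{\bbH}\!|z_1|^{2m}|z_2|^{2n}|z_2|^{\nu+2}(1-|z_1/z_2|^2)^{\nu+2}(1-|z_2|^2)^{\nu+2}dz,
\end{equation*}
which is a Beta-type integral that converges precisely when $m\geq0$ and $m+n+\tfrac\nu2+3>0$, and in that case equals a constant times $\dfrac{\Gamma(m+1)\Gamma(m+n+\frac\nu2+3)}{\Gamma(m+\nu+3)\Gamma(m+n+\frac32\nu+5)}$ up to the normalizing factors, by the same identity $\int_0^1(1-x)^ax^b\,dx=\Gamma(a+1)\Gamma(b+1)/\Gamma(a+b+2)$ used earlier. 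Since $T$ multiplies by the (non-holomorphic, positive) factor $|z_2|(1-|z_1/z_2|^2)(1-|z_2|^2)$, distinct monomials in $f_j$ remain mutually orthogonal in $L^2_\nu$, so $\|Tf_j\|^2_{L^2_\nu}$ is the sum over the coefficients of $f_j$ of these monomial norms — there are no cross terms.

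Next I would carry out the bookkeeping for each of the three pieces. Writing $f=\sum_{j\ge0}\sum_{k\in\bbZ}a_{jk}z_1^jz_2^k$, the piece $f_1$ has monomials $z_1^jz_2^{k-1}$ with coefficient $j(j+k)a_{jk}$ for $j\geq1,\ k\neq-j$; applying the monomial-norm formula with $(m,n)=(j,k-1)$ gives a contribution proportional to $j^2(j+k)^2\,\dfrac{\Gamma(j+1)\Gamma(j+k+\frac\nu2+2)}{\Gamma(j+\nu+3)\Gamma(j+k+\frac32\nu+4)}\,|a_{jk}|^2$. The piece $f_2$ handles the $j=0$ monomials $z_2^{k-1}$, $k\neq0$, with coefficient $ka_{0k}$, and $f_3$ handles the diagonal $k=-j$ monomials $z_1^jz_2^{-j-1}$, $j\geq1$, with coefficient $ja_{j(-j)}$ (note $f_1$ deliberately omits $k=-j$ so the index sets partition $\{(j,k):j\geq0\}$). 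The claim to verify is then that for each $(j,k)$ in the relevant index set, the weight appearing in $\|Tf_j\|^2_{L^2_\nu}$ is comparable (uniformly in $(j,k)$) to the weight $\nu_{jk}=\dfrac{\Gamma(j+1)\Gamma(j+k+\frac\nu2+2)}{\Gamma(j+\nu+2)\Gamma(j+k+\frac32\nu+3)}$ appearing in \eqref{expansion-norm} for $\|f\|^2_{A^2_\nu}$. The ratio of Gamma factors telescopes: $\Gamma(j+\nu+2)/\Gamma(j+\nu+3)=1/(j+\nu+2)$ and $\Gamma(j+k+\frac32\nu+4)/\Gamma(j+k+\frac32\nu+3)=j+k+\frac32\nu+3$, etc., and the extra polynomial factors $j^2(j+k)^2$ (resp. $k^2$, resp. $j^2$) exactly balance these, up to constants depending only on $\nu$ — one uses that $j+\nu+2\approx j+1$ and $j+k+\frac32\nu+3\approx j+k+1$ over the index set, and that $j+k+\frac\nu2+2$ stays bounded away from $0$. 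I would also need to check the convergence conditions match: $m\geq0,\ m+n+\frac\nu2+3>0$ for the $Tf_j$-monomials translates back to $j\geq0,\ j+k+\frac\nu2+2>0$, i.e. exactly the index set $I_\nu$ of Lemma \ref{monomial-norm}, so the two series have the same terms and comparable coefficients.

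Concluding, combining the three comparisons gives $\|f\|^2_{A^2_\nu}\approx \sum_{j=1}^3\|Tf_j\|^2_{L^2_\nu}$ (with implied constants depending only on $\nu$), with the convention that both sides are $+\infty$ simultaneously; in particular $f\in A^2_\nu(\bbH)$ iff each $Tf_j\in L^2_\nu(\bbH)$. One direction is immediate from the comparison; for the other direction one should note that if each $Tf_j\in L^2_\nu$ then the formal Laurent series defines an actual holomorphic function on $\bbH$ (its coefficients are square-summable against a positive weight, so the series converges uniformly on compact subsets by the same Cauchy–Schwarz argument as in the proof of Proposition \ref{Bergman-kernel}), hence genuinely lies in $A^2_\nu(\bbH)$.

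The main obstacle I expect is purely the asymptotic analysis of the Gamma-factor ratios: one must show the comparability of the weights holds \emph{uniformly} over the whole index set $I_\nu$, including the regime $k\to+\infty$ with $j$ fixed, the regime $j\to+\infty$, and the boundary-of-cone regime where $j+k+\frac\nu2+2$ is small and positive. The polynomial prefactors $j^2(j+k)^2$ are what make this work — and the reason $f_1,f_2,f_3$ are split out is precisely that on the diagonal $k=-j$ the factor $(j+k)^2$ vanishes, so that stratum needs its own prefactor ($j^2$, from $f_3$), and similarly the $j=0$ stratum needs the prefactor $k^2$ from $f_2$; verifying that these replacements restore comparability on the degenerate strata is the delicate point. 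Everything else is the routine Beta-integral computation and the standard Cauchy–Schwarz argument for point evaluations.
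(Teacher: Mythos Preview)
Your approach is essentially identical to the paper's: compute $\|Tf_j\|^2_{L^2_\nu}$ by expanding in monomials (which remain orthogonal since $T$ is multiplication by a positive radial function), evaluate the resulting Beta integrals, and compare the coefficient weights term by term with the known series expression \eqref{expansion-norm} for $\|f\|^2_{A^2_\nu}$; the paper does exactly this and then simply writes ``the conclusion follows comparing \eqref{expansion-f} and \eqref{expansion-norm} with \eqref{norm-T1}, \eqref{norm-T2} and \eqref{norm-T3}''. Your write-up is in fact more detailed on the Gamma-ratio comparison than the paper's, though note a small arithmetic slip: the monomial computation actually gives $\Gamma(m+\nu+4)$ and $\Gamma(m+n+\tfrac32\nu+6)$ in the denominators (so, for $f_1$, $\Gamma(j+\nu+4)$ and $\Gamma(j+k+\tfrac32\nu+5)$), meaning the ratio with $\nu_{jk}$ involves \emph{two} linear factors in each variable, which is precisely what the prefactors $j^2(j+k)^2$ are there to cancel.
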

\begin{proof}
We first recall \eqref{expansion-f} and \eqref{expansion-norm}, that is, a function $f\in A^2_\nu(\bbH)$ if and only if 
\begin{equation*}
  f(z_1,z_2)=\sum_{j=0}^{+\infty}\sum_{k>-j-\frac\nu2-2}a_{jk}z_1^jz_2^k
 \end{equation*}
and
\begin{equation*}
\|f\|^2_{A^2_\nu}=\frac{(\nu+1)\Gamma(\nu+1)\Gamma(\frac32\nu+3)}{\Gamma(\frac\nu2+2)}\sum_{j=0}^{+\infty}\sum_{k>-j-\frac\nu2-2}\frac{\Gamma(j+1)\Gamma(j+k+\frac\nu2+2)}{\Gamma(j+\nu+2)\Gamma(j+k+\frac32\nu+3)}|a_{jk}|^2
\end{equation*}

Then, 
\begin{align*}
 &\|Tf_1\|^2_{L^2_\nu}= C_\nu 2^{\frac\nu2}\int_{\bbH}\Big|\sum_{j=1}^{+\infty}\sum_{k\neq-j}j(j+k)a_{jk}z_1^jz_2^{k-1}\Big|^2 |z_2|^{2+\nu}(1-|z_1/z_2|^2)^{2+\nu}(1-|z_2|^2)^{2+\nu}\, dz\\
 &\,\,\,=4\pi^{2}2^{\frac\nu2} C_\nu\sum_{j=1}^{+\infty}\sum_{k\neq-j}j^2(j+k)^2|a_{jk}|^2\int_0^1\int_0^{\rho}r^{2j+1}\rho^{2k+\nu+1}(1-(r/\rho)^2)^{2+\nu}(1-\rho^2)^{2+\nu}\, drd\rho\\
 &\,\,\,=\pi^{2}2^{\frac\nu2}C_\nu\sum_{j=1}^{+\infty}\sum_{k\neq-j}j^2(j+k)^2|a_{jk}|^2\int_0^1r^j(1-r)^{2+\nu}\,dr\int_0^1 \rho^{j+k+\frac\nu2+1}(1-\rho)^{2+\nu}\, d\rho\\
 &<+\infty
 \end{align*}
 if and only if $\nu>-3$ and $j+k+\frac\nu2+1>-1$. In particular,
 
 \begin{equation}\label{norm-T1}
 \|Tf_1\|^2_{L^2_\nu}\approx\sum_{j=1}^{+\infty}\sum_{\substack{k\neq-j\\k>-j-\frac\nu2-2}}j^2(j+k)^2 \frac{\Gamma(j+1)\Gamma(j+k+\frac\nu2+2)}{\Gamma(j+\nu+4)\Gamma(j+k+\frac32\nu+5)}|a_{jk}|^2.
\end{equation}

Similarly, for $Tf_2$ we get
\begin{align*}
 \|Tf_2\|^2_{L^2_\nu}&=4\pi^22^{\frac\nu2}C_\nu\sum_{k\in\bbZ\backslash\{0\}}k^2|a_{0k}|^2\int_0^1\int_0^{\rho}r\rho^{2k+\nu+1}(1-(r/\rho)^2)^{2+\nu}(1-\rho^2)^{2+\nu}\, drd\rho\\
 &=\pi^{2}C_\nu\sum_{k\in\bbZ\backslash\{0\}}k^2|a_{0k}|^2\int_0^1(1-r)^{2+\nu}\,dr\int_0^1\rho^{k+\frac\nu2+1}(1-\rho)^{2+\nu}\, d\rho\\
 &<+\infty
 \end{align*}
 if and only if $\nu>-3$ and $k+\frac\nu2+1>-1$.
 In particular,
 \begin{equation}\label{norm-T2}
 \|Tf_2\|^2_{L^2_\nu}\approx\sum_{\substack{k\in\bbZ\backslash\{0\}\\k>-\frac\nu2-2}}k^2\frac{\Gamma(k+\frac\nu2+2)}{\Gamma(k+\frac32\nu+5)}|a_{0k}|^2.
\end{equation}
At last,
\begin{align*}
 \|Tf_3\|^2_{L^2_\nu}&=4\pi^{2}2^{\frac\nu2}C_\nu\sum_{j=1}^{+\infty}j^2|a_{j(-j)}|^2\int_0^1\int_0^\rho r^{2j+1}\rho^{-2j+\nu+1}(1-(r/\rho)^2)^{2+\nu}(1-\rho^2)^{2+\nu}\, drd\rho\\
 &=\pi^{2}2^{\frac\nu2}C_\nu\sum_{j=1}^{+\infty}j^2|a_{j(-j)}|^2\int_0^1r^j(1-r)^{2+\nu}\,dr\int_0^1\rho^{\frac\nu2+1}(1-\rho)^{2+\nu}\, d\rho\\
 &<+\infty
 \end{align*}
 if and only if $\nu>-3$ and $\frac\nu2+2>-1$. Hence,
\begin{equation}\label{norm-T3}
 \|Tf_3\|^2_{L^2_\nu}\approx\sum_{j=1}^{+\infty}j^2\frac{\Gamma(j+1)}{\Gamma(j+\nu+4)}|a_{j(-j)}|^2.
\end{equation}
The conclusion follows comparing \eqref{expansion-f} and \eqref{expansion-norm} with \eqref{norm-T1},\eqref{norm-T2} and \eqref{norm-T3}.
\end{proof}
From the previous proposition we deduce that we can we can endow $A^2_\nu$ with the equivalent norm
\begin{equation*}
\|f\|_{A^2_\nu}=|a_{00}|+\|Tf_1\|_{L^2_\nu}+\|Tf_2\|_{L^2_\nu}+\|Tf_3\|_{L^2_\nu}.
\end{equation*}
Moreover, we can use this norm to define some new spaces for $-2<\nu<-1$, namely, we define the weighted Dirichlet space $\mathcal D_\nu$ as 
\begin{equation*}
\mathcal D_\nu =\big\{f\in\Hol(\bbH): \|f\|_{*}<+\infty\big\}, 
\end{equation*}
where
\begin{equation}\label{norm-Dnu}
\|f\|_{*}:=|a_{00}|+\|Tf_1\|_{L^2_\nu}+\|Tf_2\|_{L^2_\nu}+\|Tf_3\|_{L^2_\nu}.
\end{equation}
We have the following result. 
\begin{thm}
 There exists an inner product $\left<\cdot,\cdot\right>_{\mathcal D_\nu}$ on $\mathcal D_\nu$ such that $(\mathcal D_\nu,\|\cdot\|_{\mathcal D_\nu})$ is a reproducing kernel Hilbert space with kernel 
\begin{equation*}
K_{\nu}(z,w)=c_\nu\frac{(z_2\overline w_2)^{-1}}{(1-\frac{z_1\overline w_1}{z_2\overline w_2})^{\nu+2}}F\Big(\frac32\nu+2,1;\frac\nu2+1,z_2\overline w_2\Big).
\end{equation*}
where $c_\nu= \frac{(\frac\nu2+1)}{(\frac32\nu+2)}$.
\end{thm}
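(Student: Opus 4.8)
The plan is to imitate the proof of Proposition~\ref{Bergman-kernel}, recycling the coefficient computations from the proof of Theorem~\ref{bergman-derivative}, whose Beta-integral evaluations use only $\nu>-3$ and hence remain valid for $\nu\in(-2,-1)$. First I would test the individual monomials $z_1^jz_2^k$, for which $f_1,f_2,f_3$ each collapse to a single term: reading off \eqref{norm-T1}, \eqref{norm-T2}, \eqref{norm-T3}, the $f_1$-piece has finite $L^2_\nu$-norm iff $j+k+\frac\nu2+1>-1$ (when $j\ge1$, $k\ne-j$), the $f_2$-piece iff $k+\frac\nu2+1>-1$ (when $j=0$, $k\ne0$), and the $f_3$-piece always. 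Since $\frac\nu2\in(-1,-\frac12)$ these reduce, for integers $(j,k)$, to $j+k+1\ge0$; thus $\mathcal D_\nu$ — as for the Hardy space, and unlike the limit case $\nu=-2$ — consists exactly of the functions
\[
f(z_1,z_2)=\sum_{j=0}^{+\infty}\sum_{k=-j-1}^{+\infty}a_{jk}z_1^jz_2^k,\qquad
\|f\|_{*}^2\approx\sum_{j=0}^{+\infty}\sum_{k=-j-1}^{+\infty}v_{jk}\,|a_{jk}|^2,
\]
where $v_{jk}=\|z_1^jz_2^k\|_{*}^2$ is the explicit ratio of Gamma functions coming from \eqref{norm-T1}, \eqref{norm-T2} or \eqref{norm-T3} according to whether $k\ge-j+1$, $k=-j$, or $k=-j-1$.

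Next I would expand the candidate kernel into a power series: using the binomial series for $\bigl(1-\frac{z_1\overline w_1}{z_2\overline w_2}\bigr)^{-(\nu+2)}$ and the defining series of $F\bigl(\frac32\nu+2,1;\frac\nu2+1;z_2\overline w_2\bigr)$, and collecting powers of $z_1\overline w_1$ and $z_2\overline w_2$, gives $K_\nu(z,w)=\sum_{j\ge0}\sum_{k\ge-j-1}w_{jk}(z_1\overline w_1)^j(z_2\overline w_2)^k$ with $w_{jk}$ a single clean Gamma quotient — the $\nu<-1$ analogue of the coefficients in the last display of the proof of Proposition~\ref{Bergman-kernel}. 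I then put on $\mathcal D_\nu$ the inner product making $\{z_1^jz_2^k\}_{j\ge0,\,k\ge-j-1}$ orthogonal with $\|z_1^jz_2^k\|_{\mathcal D_\nu}^2:=w_{jk}^{-1}$, i.e.\ $\langle f,g\rangle_{\mathcal D_\nu}=\sum_{j,k}w_{jk}^{-1}a_{jk}\overline{b_{jk}}$; the key point is that $v_{jk}\approx w_{jk}^{-1}$ uniformly in $(j,k)$, whence $\|\cdot\|_{\mathcal D_\nu}$ is equivalent to $\|\cdot\|_{*}$, so $(\mathcal D_\nu,\|\cdot\|_{\mathcal D_\nu})$ is complete, a Hilbert space with the monomials as an orthogonal basis. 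Boundedness of point-evaluations then follows exactly as in Proposition~\ref{Bergman-kernel}(i): for fixed $(z_1,z_2)\in\bbH$, writing $|z_1|=\rho|z_2|$ with $\rho<1$ one has $|z_1|^{2j}|z_2|^{2k}=\rho^{2j}|z_2|^{2(j+k)}$, and since the $v_{jk}$ are bounded below by a positive constant and grow only polynomially, $\sum_{j,k}w_{jk}\,|z_1|^{2j}|z_2|^{2k}<\infty$. Hence $\mathcal D_\nu$ is a reproducing kernel Hilbert space whose kernel is $\sum_{j,k}w_{jk}(z_1\overline w_1)^j(z_2\overline w_2)^k$; resumming first the binomial series in $z_1\overline w_1/(z_2\overline w_2)$ and then recognizing the remaining series in $z_2\overline w_2$ as a hypergeometric function — exactly as in the last display of the proof of Proposition~\ref{Bergman-kernel} — identifies this sum with the claimed closed form and fixes the constant $c_\nu=\frac{\nu/2+1}{3\nu/2+2}$.

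The main obstacle is the uniform two-sided estimate $v_{jk}\approx w_{jk}^{-1}$. The three building blocks $f_1,f_2,f_3$ produce Gamma quotients of genuinely different shapes, carrying polynomial prefactors $j^2(j+k)^2$, $k^2$ and $j^2$ respectively and supported on the three regions $k\ge-j+1$, $k=-j$, $k=-j-1$ of the index cone; one must check, using $\Gamma(x+a)/\Gamma(x+b)\sim x^{a-b}$ as $x\to\infty$, that each of them is comparable to the single ratio $\Gamma(j+\nu+2)\,\Gamma\bigl(j+k+\frac32\nu+3\bigr)\big/\bigl[\Gamma(j+1)\,\Gamma\bigl(j+k+\frac\nu2+2\bigr)\bigr]$ coming from $K_\nu$, the degenerate lines $k=-j$ and $k=-j-1$ — where the polynomial prefactors collapse — demanding the closest attention.
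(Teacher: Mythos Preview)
Your approach is essentially the paper's: put on $\mathcal D_\nu$ the inner product whose monomial norms are the Gamma ratios of \eqref{expansion-norm} (equivalently, the reciprocals of the Laurent coefficients of $K_\nu$), check via $\Gamma(x+a)/\Gamma(x+b)\sim x^{a-b}$ that this is equivalent to $\|\cdot\|_*$, and then read off the kernel exactly as in Proposition~\ref{Bergman-kernel}. One bookkeeping slip to fix before carrying out the comparison $v_{jk}\approx w_{jk}^{-1}$: the supports of $f_1,f_2,f_3$ are $\{j\ge1,\,k\ne-j\}$, $\{j=0,\,k\ne0\}$, $\{j\ge1,\,k=-j\}$, not the three strips $k\ge-j+1$, $k=-j$, $k=-j-1$ you list --- in particular the boundary line $k=-j-1$ is governed by \eqref{norm-T1} (for $j\ge1$) or \eqref{norm-T2} (for $j=0$), not by \eqref{norm-T3}.
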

\begin{proof}
From \eqref{operators-T} and \eqref{norm-Dnu} we get that any function $f\in \mathcal D_\nu$ is of the form
\begin{equation*}
f(z_1,z_2)=\sum_{j=0}^{+\infty}\sum_{k>-j-\frac\nu2-2}a_{jk}z_1^jz_2^k.
\end{equation*}
Moreover, notice that the formula for the $A^2_\nu(\bbH)$ norm in terms of the Laurent series coefficients, that is, formula \eqref{expansion-norm}, is meaningful for $-2<\nu<-1$ as well. Therefore, we endow $\mathcal D_\nu$ with the inner product
\begin{equation*}
\left<f,g\right>_{\mathcal D_\nu}:=\frac{(\nu+1)\Gamma(\nu+1)\Gamma(\frac32\nu+3)}{\Gamma(\frac\nu2+2)}\sum_{j=0}^{+\infty}\sum_{k>-j-\frac\nu2-2}\frac{\Gamma(j+1)\Gamma(j+k+\frac\nu2+2)}{\Gamma(j+\nu+2)\Gamma(j+k+\frac32\nu+3)}a_{jk}\overline {b_{jk}}
\end{equation*}
where 
\begin{equation*}
g(z_1,z_2):=\sum_{j=0}^{+\infty}\sum_{k>-j-\frac\nu2-2}b_{jk}z_1^jz_2^k.
\end{equation*}
From the properties of the Gamma function we deduce that the norm induced by this inner product is equivalent to the norm \eqref{norm-Dnu}. By means of the Cauchy-Schwartz inequality, similarly to the proof of Proposition \ref{Bergman-kernel}, we obtain that $\mathcal D_\nu$ is a reproducing kernel Hilbert space and the reproducing kernel $K_{\nu}$ is given by
\begin{align*}
 K_{\nu}((z_1,z_2), (w_1,w_2)) &= \frac{(\frac\nu2+1)}{(\frac32\nu+2)}\frac{(z_2\overline w_2)^{-1}}{(1-\frac{z_1\overline w_1}{z_2\overline w_2})^{\nu+2}}F\Big(\frac32\nu+2,1;\frac\nu2+1,z_2\overline w_2\Big).
\end{align*}
as we wished to show.
\end{proof}

\medskip

Finally we define our candidate Dirichlet space $\mathcal D$, which corresponds to the case $\nu=-2$, as  
\begin{equation*}
 \mathcal D=\big\{f\in\Hol(\bbH): \|f\|_{\sharp}<+\infty\big\}
\end{equation*}
where
\begin{equation}\label{norm-Dirichlet}
\|f\|_{\sharp}:= |a_{00}|+\|Tf_1\|_{L^2_{-2}}+\|Tf_2\|_{L^2_{-2}}+\|Tf_3\|_{L^2_{-2}}.
\end{equation}
We recall that the space $L^2_{-2}(\bbH)$ is endowed with the norm \eqref{Lp-norm}, that is,
\begin{equation*}
\|f\|^2_{L^2_{-2}}\approx\int_{\bbH}|f(z_1,z_2)|^2|z_2|^{-2}(1-|z_1/z_2|^2)^{-2}(1-|z_2|^2)^{-2}\, dz,
\end{equation*}
and we observe that the measure 
\begin{equation*}
(1-|z_1/z_2|^2)^{-2}(1-|z_2|^2)^{-2}|z_2|^{-2}\,dz= K(z,z)\, dz
\end{equation*}
is invariant for $\Aut(\bbH)$. The automorphisms of the Hartogs triangle are completely characterized and  are described in the following proposition. We refer the reader to \cite{L,CX,K,K1}. 

\begin{prop}
 A function $\Psi\in \Aut(\bbH)$ if and only if $\Psi(z_1,z_2)=(\tilde z_1,\tilde z_2)$ where
\begin{align*}
 &\tilde z_1=z_2\varphi\Big(\frac{z_1}{z_2} \Big),\qquad\quad\!\!\! \varphi\in\Aut(\bbD)\\
 &\tilde z_2= c z_2,\qquad\quad\quad\quad  c\in\bbC,|c|=1.
\end{align*}
\end{prop}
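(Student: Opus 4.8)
The plan is to reduce everything to the product domain $\bbD\times\bbD^{*}$ through the biholomorphism $\Phi(w_1,w_2)=(w_1w_2,w_2)$ and then exploit the well-understood automorphism group of the bidisc.

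I would first dispose of the ``if'' direction by a direct verification. If $\varphi\in\Aut(\bbD)$ and $|c|=1$, then for $(z_1,z_2)\in\bbH$ one has $z_2\neq0$, $z_1/z_2\in\bbD$ and $|z_2|<1$; setting $(\tilde z_1,\tilde z_2)=(z_2\varphi(z_1/z_2),cz_2)$ one gets $\tilde z_2\in\bbD\setminus\{0\}$ and $\tilde z_1/\tilde z_2=\bar c\,\varphi(z_1/z_2)\in\bbD$, so $\Psi$ maps $\bbH$ into $\bbH$; a short computation shows that $\Psi^{-1}(z_1,z_2)=(\bar c z_2\,\varphi^{-1}(cz_1/z_2),\bar c z_2)$ is again of the same type (a rotation composed with $\varphi^{-1}$ composed with a rotation is still in $\Aut(\bbD)$), so $\Psi$ is a biholomorphic self-map of $\bbH$.

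For the ``only if'' direction, given $\Psi\in\Aut(\bbH)$ I would pass to $\widehat\Psi:=\Phi^{-1}\circ\Psi\circ\Phi\in\Aut(\bbD\times\bbD^{*})$ and write $\widehat\Psi=(g_1,g_2)$ with $g_1,g_2$ holomorphic and bounded (they take values in $\bbD$). The key observation is that $\bbD\times\bbD^{*}=(\bbD\times\bbD)\setminus(\bbD\times\{0\})$ and that $\bbD\times\{0\}$ is an analytic subset of codimension one, so by the Riemann removable singularity theorem $g_1,g_2$ extend holomorphically across it; since $\widehat\Psi$ is injective neither $g_i$ is constant, so by the maximum principle the extensions map $\bbD\times\bbD$ into $\bbD$. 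Applying the same to $\widehat\Psi^{-1}$ and noting that the two extensions compose to the identity on the dense set $\bbD\times\bbD^{*}$, hence everywhere by the identity theorem, I would conclude that $\widehat\Psi$ extends to an automorphism $\widetilde{\widehat\Psi}$ of the bidisc $\bbD\times\bbD$. By the classical structure theorem for $\Aut(\bbD\times\bbD)$, $\widetilde{\widehat\Psi}$ is a coordinatewise automorphism, possibly composed with the coordinate swap. Since $\widetilde{\widehat\Psi}$ is a bijection of $\bbD\times\bbD$ restricting to a bijection of $\bbD\times\bbD^{*}$, it must carry $\bbD\times\{0\}$ onto itself: the swapped case would give $\widetilde{\widehat\Psi}(\bbD\times\{0\})=\{\varphi_1(0)\}\times\bbD$, never equal to $\bbD\times\{0\}$, so it is excluded; in the remaining case $\widetilde{\widehat\Psi}(w_1,w_2)=(\varphi_1(w_1),\varphi_2(w_2))$ the condition forces $\varphi_2(0)=0$, i.e.\ $\varphi_2(w_2)=cw_2$ with $|c|=1$. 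Hence $\widehat\Psi(w_1,w_2)=(\varphi(w_1),cw_2)$ with $\varphi\in\Aut(\bbD)$, and conjugating back by $\Phi$ gives $\Psi(z_1,z_2)=(cz_2\,\varphi(z_1/z_2),cz_2)$; absorbing the rotation into the automorphism by setting $\psi:=c\varphi\in\Aut(\bbD)$ yields precisely the claimed form.

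The main obstacle is the extension step: one has to justify carefully that a biholomorphism of $\bbD\times\bbD^{*}$ genuinely extends to a \emph{biholomorphism} of $\bbD\times\bbD$, which combines a several-variables removable-singularity argument for the components with the verification that the extended map remains injective and surjective (via the identity theorem applied to the composition with the extended inverse). Everything afterwards is bookkeeping: quoting the known description of $\Aut(\bbD\times\bbD)$, matching the puncture locus $\bbD\times\{0\}$, and transporting the result back to $\bbH$ through $\Phi$.
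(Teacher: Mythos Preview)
The paper does not prove this proposition at all: it is stated as a known fact with a reference to the literature (``We refer the reader to \cite{L,CX,K,K1}''). Your argument therefore goes well beyond what the paper supplies.

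Your proof is correct. The ``if'' direction is routine, and your handling of the ``only if'' direction---conjugating by $\Phi$ to reduce to $\Aut(\bbD\times\bbD^{*})$, extending across the analytic set $\bbD\times\{0\}$ via boundedness and the removable singularity theorem, doing the same for the inverse, and then invoking the structure of $\Aut(\bbD\times\bbD)$---is the natural route and matches the strategy one finds in the references the paper cites. One small cosmetic point: the non-constancy of the $g_i$ is not really needed to conclude that the extensions land in $\bbD$ (the maximum principle alone suffices, since the values on the dense subset $\bbD\times\bbD^{*}$ already lie in $\bbD$), but it does no harm. The exclusion of the coordinate swap and the identification $\varphi_2(w_2)=cw_2$ via the Schwarz lemma are handled correctly, and the final bookkeeping with $\psi=c\varphi$ recovers the stated form exactly.
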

The following proposition holds.
\begin{prop}\label{tau}
Let $d\tau$ be the measure 
\begin{equation*}
d\tau:=K(z,z)\,dz=|z_2|^{-2}(1-|z_1/z_2|^2)^{-2}(1-|z_2|^2)^{-2}\,dz.
\end{equation*}
Then, 
 \begin{equation*}
 \int_{\bbH}f\, d\tau=\int_{\bbH} (f\circ\Psi)\, d\tau
 \end{equation*}
 for all $\Psi\in\Aut(\bbH)$.
\end{prop}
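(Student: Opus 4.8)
The plan is to deduce Proposition \ref{tau} from the biholomorphic transformation law of the Bergman kernel together with the ordinary (real) change of variables formula. The measure $d\tau=K(z,z)\,dz$ is the Bergman-metric volume element, and its $\Aut(\bbH)$-invariance is a general fact; I would simply record the short argument in our concrete situation.

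First I would recall that for any biholomorphism $\Psi$ of a domain $\Omega\subseteq\bbC^n$ onto itself the Bergman kernel satisfies $K_\Omega(z,w)=\jac_\bbC\Psi(z)\,\overline{\jac_\bbC\Psi(w)}\,K_\Omega(\Psi(z),\Psi(w))$. This is immediate from the fact that $f\mapsto(\jac_\bbC\Psi)\cdot(f\circ\Psi)$ is a surjective isometry of $A^2(\Omega)$ — an isometry by the change of variables formula, whose real Jacobian for a holomorphic map equals $|\jac_\bbC\Psi|^2$, and surjective because $\Psi$ is invertible — and from the way a unitary transforms a reproducing kernel. Specializing to $\Omega=\bbH$, $\Psi\in\Aut(\bbH)$, and $w=z$ gives the pointwise identity $K(z,z)=|\jac_\bbC\Psi(z)|^{2}\,K(\Psi(z),\Psi(z))$. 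I would then change variables $u=\Psi(z)$ in $\int_\bbH (f\circ\Psi)\,d\tau=\int_\bbH f(\Psi(z))\,K(z,z)\,dz$: writing $z=\Psi^{-1}(u)$ one has $dz=|\jac_\bbC\Psi(\Psi^{-1}(u))|^{-2}\,du$, while by the identity just obtained $K(\Psi^{-1}(u),\Psi^{-1}(u))=|\jac_\bbC\Psi(\Psi^{-1}(u))|^{2}\,K(u,u)$. The two Jacobian factors cancel and the integral becomes $\int_\bbH f(u)\,K(u,u)\,du=\int_\bbH f\,d\tau$, which is the assertion.

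There is no genuinely hard step here; the only point to keep an eye on is that the fiber $\{z_1=0\}$ has Lebesgue measure zero, so it plays no role in the integrals. If a fully explicit verification is preferred one can instead invoke the preceding proposition: via $\Phi(w_1,w_2)=(w_1w_2,w_2)$ every $\Psi\in\Aut(\bbH)$ becomes the product automorphism $(w_1,w_2)\mapsto(\varphi(w_1),cw_2)$ with $\varphi\in\Aut(\bbD)$ and $|c|=1$, and a direct computation (using $\jac_\bbC\Phi(w)=w_2$ and the explicit formula for $K$) shows that $\Phi$ pulls $d\tau$ back, off the puncture $w_2=0$, to a constant multiple of $(1-|w_1|^2)^{-2}(1-|w_2|^2)^{-2}\,dw$, the product of the two $\Aut(\bbD)$-invariant area elements, whose invariance under a product automorphism is elementary.
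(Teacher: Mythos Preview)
Your main argument is correct. Both you and the paper start from the same pointwise identity $K(z,z)=|\jac_\bbC\Psi(z)|^{2}\,K(\Psi(z),\Psi(z))$, but you establish it differently. You derive it from the general transformation law of the Bergman kernel under biholomorphisms (via the unitarity of $f\mapsto(\jac_\bbC\Psi)\cdot(f\circ\Psi)$ on $A^2$), so your proof never uses the explicit description of $\Aut(\bbH)$ or the explicit formula for $K$; it would work verbatim on any bounded domain. The paper instead plugs in the concrete form of $\Psi$ given in the preceding proposition and the explicit expression $K(z,z)=\big(2|z_2|^2(1-|z_1/z_2|^2)^2(1-|z_2|^2)^2\big)^{-1}$, reducing the identity to the $\Aut(\bbD)$-invariance of $(1-|\eta|^2)^{-2}\,d\eta$ in the variable $\eta=z_1/z_2$. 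Your alternative explicit verification via the $\Phi$-pullback is essentially this same computation carried out in the $(w_1,w_2)=(z_1/z_2,z_2)$ coordinates. What your abstract route buys is economy and generality; what the paper's route buys is that it makes visible exactly which one-variable fact is responsible for the invariance.
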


\begin{proof}[Proof of Proposition \ref{tau}]
It suffices to show that 
\begin{equation*}
|K(\Psi(z),\Psi(z))||\Psi'(z)|^2\,dz=K(z,z)\, dz
\end{equation*}
for all $\Psi\in\Aut(\bbH)$. This equality holds if and only if
\begin{equation*}
\Big(1-\big|\varphi\big(z_1/z_2\big)\big|\Big)^{-2}\Big|\varphi'\big(z_1/z_2\big)\Big|^{2}\, dz=\Big(1-\big|z_1/z_2\big|\Big)^{-2}\, dz 
\end{equation*}
for all $\varphi\in\Aut(\bbD)$. The  conclusion now follows from the invariance of the measure $(1-|\eta|^2)^{-2}\, d\eta$ with respect to $\varphi$ on the unit disc.
\end{proof}
From \eqref{operators-T} and \eqref{norm-Dirichlet} we get that any function $f\in\mathcal D$ is of the form
\begin{equation*}
f(z_1,z_2)=\sum_{j=0}^{+\infty}\sum_{k=-j}a_{jk}z_1^jz_2^k.
\end{equation*}
We have the following result.
\begin{thm}\label{inner product-Dirichlet}
There exists an inner product $\left<\cdot,\cdot,\right>_{\mathcal D}$ on $\mathcal D$ such that $(\mathcal D,\|\cdot\|_{\mathcal D})$ is a a reproducing kernel Hilbert space with kernel
\begin{equation*}
 K_{\mathcal D}((z_1,z_2),(w_1,w_2))=\frac{1}{z_1\overline w_1}\log\Big(\frac{1}{1-(z_1\overline w_1)/(z_2\overline w_2)}\Big)\log\Big(\frac{1}{1-z_2\overline w_2}\Big)
\end{equation*}
\end{thm}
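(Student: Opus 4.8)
The plan is to reduce $\|\cdot\|_\sharp$ to an explicit weighted $\ell^2$-norm in the Laurent coefficients — exactly as was done to construct the spaces $\mathcal D_\nu$ — and then to sum the resulting reproducing kernel series in closed form. First I would specialize the computations in the proof of Theorem \ref{bergman-derivative}: the identities \eqref{norm-T1}, \eqref{norm-T2}, \eqref{norm-T3} for $\|Tf_1\|^2_{L^2_\nu}$, $\|Tf_2\|^2_{L^2_\nu}$, $\|Tf_3\|^2_{L^2_\nu}$ remain meaningful at $\nu=-2$ (just as \eqref{expansion-norm} did for $\mathcal D_\nu$). Setting $\nu=-2$ and cancelling Gamma factors via $\Gamma(x+1)=x\Gamma(x)$ gives, with the relevant index ranges, $\|Tf_1\|^2_{L^2_{-2}}\approx\sum_{j\geq1,\,j+k\geq1}(j+1)(j+k+1)|a_{jk}|^2$, $\|Tf_2\|^2_{L^2_{-2}}\approx\sum_{k\geq1}(k+1)|a_{0k}|^2$ and $\|Tf_3\|^2_{L^2_{-2}}\approx\sum_{j\geq1}(j+1)|a_{j(-j)}|^2$. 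Adding the $|a_{00}|^2$ term and noting that the four index sets (the axis $j=0$, the diagonal $k=-j$, the bulk $j\geq1,\,j+k\geq1$, and the single point $(0,0)$) partition $\{j\geq0,\ j+k\geq0\}$, these combine into
\begin{equation*}
\|f\|^2_\sharp\approx\sum_{j=0}^{+\infty}\sum_{k=-j}^{+\infty}(j+1)(j+k+1)|a_{jk}|^2,
\end{equation*}
which is exactly the norm $\|\cdot\|^2_{\mathcal D}$ announced in the introduction.

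Next I would equip $\mathcal D$ with the inner product $\langle f,g\rangle_{\mathcal D}:=\sum_{j\geq0}\sum_{k\geq-j}(j+1)(j+k+1)a_{jk}\overline{b_{jk}}$, whose induced norm is equivalent to $\|\cdot\|_\sharp$ by the previous step; completeness of $(\mathcal D,\|\cdot\|_{\mathcal D})$ is then immediate from the weighted-$\ell^2$ picture. A Cauchy--Schwarz argument identical to the one in the proof of Proposition \ref{Bergman-kernel} — using that for each fixed $(z_1,z_2)\in\bbH$ the series $\sum_{j\geq0,\,k\geq-j}|z_1|^{2j}|z_2|^{2k}/\big((j+1)(j+k+1)\big)$ converges — shows the point evaluations are bounded on $\mathcal D$. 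Hence $\mathcal D$ is a reproducing kernel Hilbert space, and since $\{z_1^jz_2^k/\|z_1^jz_2^k\|_{\mathcal D}:j\geq0,\ j+k\geq0\}$ is an orthonormal basis, its kernel is
\begin{equation*}
K_{\mathcal D}(z,w)=\sum_{j=0}^{+\infty}\sum_{k=-j}^{+\infty}\frac{(z_1\overline w_1)^j(z_2\overline w_2)^k}{(j+1)(j+k+1)}.
\end{equation*}

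Finally I would sum this series. Writing $\zeta=z_1\overline w_1$ and $\eta=z_2\overline w_2$ (so $|\eta|<1$ and $|\zeta/\eta|<1$ for $(z,w)\in\bbH\times\bbH$) and substituting $m=j+k\geq0$, the double sum factors:
\begin{equation*}
K_{\mathcal D}(z,w)=\Big(\sum_{j=0}^{+\infty}\frac{(\zeta/\eta)^j}{j+1}\Big)\Big(\sum_{m=0}^{+\infty}\frac{\eta^m}{m+1}\Big),
\end{equation*}
and each factor is evaluated from $\sum_{n\geq0}x^{n+1}/(n+1)=\log\frac{1}{1-x}$ for $|x|<1$. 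This yields $K_{\mathcal D}(z,w)=\frac{1}{\zeta}\log\frac{1}{1-\zeta/\eta}\,\log\frac{1}{1-\eta}$, which is the asserted formula (the apparent singularity at $\zeta=0$ being removable, since $\log\frac{1}{1-\zeta/\eta}$ vanishes to first order there). I do not expect a serious obstacle here: the genuine work is the index bookkeeping in the first step — arranging the contributions from the bulk, the diagonal $k=-j$ and the axis $j=0$ into the single weight $(j+1)(j+k+1)$, together with the Gamma-function cancellations at $\nu=-2$ — while the closed-form evaluation of the kernel is routine once one makes the substitution $m=j+k$.
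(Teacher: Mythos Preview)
Your proposal is correct and follows essentially the same approach as the paper: define the inner product $\langle f,g\rangle_{\mathcal D}=\sum_{j\geq0}\sum_{k\geq-j}(j+1)(j+k+1)a_{jk}\overline{b_{jk}}$, verify its norm is equivalent to $\|\cdot\|_\sharp$, invoke Cauchy--Schwarz as in Proposition~\ref{Bergman-kernel} for the RKHS structure, and sum the kernel series. You simply supply more detail than the paper does---in particular the explicit $\nu=-2$ specialization of \eqref{norm-T1}--\eqref{norm-T3} and the partition of the index set, where the paper just appeals to ``the property of the Gamma function,'' and the factorization via $m=j+k$, which the paper writes down without comment.
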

\begin{proof}
We endow $\mathcal D$ with the inner product
\begin{equation*}
\left<f,g\right>_{\mathcal D}:=\sum_{j=0}^{+\infty}\sum_{k=-j}^{+\infty}(j+1)(j+k+1)a_{jk}\overline{b_{jk}}.
\end{equation*}
From the property of the Gamma function we get that the norm induced by this inner product is equivalent to the norm $\|\cdot\|_{\sharp}$. Similarly to the proof of Proposition \ref{Bergman-kernel}, we prove that $\mathcal D$ is a reproducing kernel Hilbert space by means of the Cauchy--Schwartz inequality and the reproducing kernel of $\mathcal D$ with respect to $\left<\cdot,\cdot\right>_{\mathcal D}$ is given by
\begin{align*}
 K_{\mathcal D}((z_1,z_2),(w_1,w_2))&=\sum_{j=0}^{+\infty}\sum_{k=-j}^{+\infty}\frac{(z_1\overline w_1)^j(z_2\overline w_2)^k}{(j+1)(j+k+1)}\\
&=\frac{1}{z_1\overline w_1}\log\Big(\frac{1}{1-(z_1\overline w_1)/(z_2\overline w_2)}\Big)\log\Big(\frac{1}{1-z_2\overline w_2}\Big)
\end{align*}
as we wished to show.
\end{proof}

Finally, we prove that there exists a surjective isometry from $\mathcal D$ onto $\mathcal D(\bbD^2)$, the Dirichlet space on the bidisc. Following \cite{AMPS2}, we recall that a holomorphic function $f(w_1,w_2)=\sum_{j=0}^{+\infty}\sum_{k=0}^{+\infty}a_{jk}w_1^jw_2^k$ is in $\mathcal D(\bbD^2)$ if and only if 
\begin{equation*}
\|f\|^2_{\mathcal D(\bbD^2)}=\sum_{j=0}^{+\infty}\sum_{k=0}^{+\infty}|a_{jk}|^2(j+1)(k+1)<+\infty.
\end{equation*}
\proof[Proof of Theorem \ref{main-3}]
Let $f(z_1,z_2)=\sum_{j=0}^{+\infty}\sum_{k=-j}^{+\infty}a_{jk}z_1^jz_2^k\in \mathcal D$ and consider the map $f\mapsto\widetilde f:= f\circ\Phi$  where $\Phi(w_1,w_2)=(w_1w_2,w_2)$ is the biholomorphic map $\Phi:\mathbb D\times\bbD^*\to\bbH$, $(w_1,w_2)\mapsto(w_1w_2,w_2)$. Then,
\begin{align*}
\widetilde f(w_1,w_2)&=\sum_{j=0}^{+\infty}\sum_{k=-j}^{+\infty} a_{jk}(w_1w_2)^jw_2^k=\sum_{j=0}^{+\infty}\sum_{k=0}^{+\infty}a_{j(k-j)}w_1^j w_2^{k}=\sum_{j=0}^{+\infty}\sum_{k=0}^{+\infty}\widetilde a_{jk}w_1^j w_2^{k},
\end{align*}
where we set $\widetilde a_{jk}=a_{j(k-j-1)}$. Therefore, $\widetilde f\in \Hol(\bbD^2)$ and
\begin{align*}
\|\widetilde f\|^2_{\mathcal D(\bbD^2)}&=\sum_{j=0}^{+\infty}\sum_{k=0}^{+\infty}|\widetilde a_{jk}|^2(j+1)(k+1)\\
&=\sum_{j=0}^{+\infty}\sum_{k=0}^{+\infty}|a_{j(k-j)}|^2(j+1)(k+1)\\
&=\sum_{j=0}^{+\infty}\sum_{k=-j}^{+\infty}|a_{jk}|^2(j+1)(j+k+1)\\
&=\|f\|^2_{\mathcal D}
\end{align*}
as we wished to show. The map $f\mapsto\widetilde f$ is clearly surjective and the inverse map is given by $g\mapsto  g\circ\Phi^{-1}$ where $g\in \mathcal D(\bbD^2)$. 
\qed

\section{Concluding Remarks}\label{final-remarks}

We think that the family of holomorphic functions spaces we introduced is interesting and worth investigating. The results we prove in this paper are just a first step for this investigation and the techniques we use are classical and not excessively complicated. The major contribution of the paper is the very definition of the family of spaces itself. A few comments are needed.

Our proofs heavily rely on the Hilbert space setting and on the Laurent expansion of the functions involved. If we want to develop our spaces in a $L^p$ setting, $p\neq2$, we need a different approach, especially for the definition of the weighted Dirichlet and Dirichlet spaces. Let $\nu\in(-2,-1)$ and consider the weighted Dirichlet space on the disc $\mathcal D_\nu(\bbD)$, that is, the space of holomorphic functions on $\mathbb D$ endowed with the norm
\begin{equation*}
\|f\|^2_{\nu}=\Big\|\sum_{j\geq0} a_jz^j\Big\|^2_{\nu}:=\sum_{j\geq 0} \frac{\Gamma(j+1)\Gamma(\nu+2)}{\Gamma(j+\nu+2)}|a_j|^2.
\end{equation*}
Then, $\mathcal D_\nu(\bbD)$ is a reproducing kernel Hilbert space with kernel $K_\nu(z,w)=(1-z\overline w)^{-\nu-2}$. It is easily seen that $\|f\|_\nu<+\infty$ if and only if 
\begin{equation*}
\int_\bbD |(1-|z|^2)\mathcal Rf(z)|^2(1-|z|^2)^\nu\, dz<+\infty,
\end{equation*}
where $\mathcal R$ is the radial derivative, that is, $\mathcal Rf(z)=\sum_{j=1}^{+\infty}ja_jz^j$. Therefore, we would like to  define the spaces $\mathcal D_\nu(\bbH)$ in terms of a natural differential operator on $\bbH$ which plays the role of the radial derivative in the unit disc. This approach would be better suited for an investigation in a non-Hilbert setting.

Another point worth of further investigation is the definition of the Hardy space $H^2(\bbH)$. As we have seen, our Hardy space arise in a very natural way, but other approaches are available. Moreover, our space $H^2(\bbH)$ does not see the pathological geometry of the Hartogs triangle. It would be interesting to define a different, but still natural, Hardy space which keeps track of the origin and compare it with our space.

Finally, it would be interesting to use our approach to define some new function spaces on generalizations of the Hartogs triangle, such as, for instance, the fat or thin Hartogs triangles.

\bigskip

{\em Acknowledgments.}  We warmly thank the anonymous referees for useful comments and remarks 
  that improved the presentation of this paper. 
 \bibliography{Hartogsbib}

\providecommand{\bysame}{\leavevmode\hbox to3em{\hrulefill}\thinspace}
\providecommand{\MR}{\relax\ifhmode\unskip\space\fi MR }
\providecommand{\MRhref}[2]{%
  \href{http://www.ams.org/mathscinet-getitem?mr=#1}{#2}
}
\providecommand{\href}[2]{#2}
\begin{thebibliography}{ARSW11}

\bibitem[AMPS18]{AMPS2}
N.~{Arcozzi}, P.~{Mozolyako}, K.-M. {Perfekt}, and G.~{Sarfatti},
  \emph{{Bi-parameter Potential theory and Carleson measures for the Dirichlet
  space on the bidisc}}, arXiv e-prints (2018), arXiv:1811.04990.

\bibitem[ARSW11]{ARSB}
N.~Arcozzi, R.~Rochberg, E.~T. Sawyer, and B.~D. Wick, \emph{The {D}irichlet
  space: a survey}, New York J. Math. \textbf{17A} (2011), 45--86. \MR{2782728}

\bibitem[Bar92]{Bar}
D.~E. Barrett, \emph{Behavior of the {B}ergman projection on the
  {D}iederich-{F}orn\ae ss worm}, Acta Math. \textbf{168} (1992), no.~1-2,
  1--10. \MR{1149863}

\bibitem[BB95]{BB95}
D.~B{\'e}koll{\'e} and A.~Bonami, \emph{Estimates for the {B}ergman and
  {S}zeg{\H o} projections in two symmetric domains of {${\bf C}^n$}}, Colloq.
  Math. \textbf{68} (1995), no.~1, 81--100. \MR{1311766}

\bibitem[Bel92]{Bell-book}
Steven~R. Bell, \emph{The {C}auchy transform, potential theory, and conformal
  mapping}, Studies in Advanced Mathematics, CRC Press, Boca Raton, FL, 1992.
  \MR{1228442}

\bibitem[BEP15]{P3}
D.~E. Barrett, D.~Ehsani, and M.~M. Peloso, \emph{Regularity of projection
  operators attached to worm domains}, Doc. Math. \textbf{20} (2015),
  1207--1225. \MR{3424478}

\bibitem[B{\c{S}}12]{Bar2}
D.~Barrett and S.~{{\c{S}}ahuto\u{g}lu}, \emph{Irregularity of the {B}ergman
  projection on worm domains in {$\mathbb C^n$}}, Michigan Math. J. \textbf{61}
  (2012), no.~1, 187--198. \MR{2904008}

\bibitem[CEM19]{CEMcN}
D.~Chakrabarti, L.~D. Edholm, and J.~D. McNeal, \emph{Duality and approximation
  of {B}ergman spaces}, Adv. Math. \textbf{341} (2019), 616--656. \MR{3873548}

\bibitem[Che10]{Chen-PAMQ}
S.-C. Chen, \emph{On dominating sets for uniform algebra on pseudoconvex
  domains}, Pure Appl. Math. Q. \textbf{6} (2010), no.~3, Special Issue: In
  honor of Joseph J. Kohn. Part 1, 715--724. \MR{2677310}

\bibitem[Che17a]{Chen1}
L.~Chen, \emph{The {$L^p$} boundedness of the {B}ergman projection for a class
  of bounded {H}artogs domains}, J. Math. Anal. Appl. \textbf{448} (2017),
  no.~1, 598--610. \MR{3579901}

\bibitem[Che17b]{Chen2}
\bysame, \emph{Weighted {B}ergman projections on the {H}artogs triangle}, J.
  Math. Anal. Appl. \textbf{446} (2017), no.~1, 546--567. \MR{3554743}

\bibitem[Che17c]{Chen3}
\bysame, \emph{Weighted {S}obolev regularity of the {B}ergman projection on the
  {H}artogs triangle}, Pacific J. Math. \textbf{288} (2017), no.~2, 307--318.
  \MR{3646892}

\bibitem[CKY20]{Chen4}
L.~Chen, S.~G. Krantz, and Yuan Yuan, \emph{{$L^p$} regularity of the {Bergman}
  projection on domains covered by the polydisk}, J. Funct. Anal. (2020),
  https://doi.org/10.1016/j.jfa.2020.108522.

\bibitem[CX01]{CX}
Z.~Chen and D.~Xu, \emph{Proper holomorphic mappings between some nonsmooth
  domains}, Chinese Ann. Math. Ser. B \textbf{22} (2001), no.~2, 177--182.
  \MR{1835398}

\bibitem[CZ16a]{CZ}
D.~Chakrabarti and Y.~E. Zeytuncu, \emph{{$L^p$} mapping properties of the
  {B}ergman projection on the {H}artogs triangle}, Proc. Amer. Math. Soc.
  \textbf{144} (2016), no.~4, 1643--1653. \MR{3451240}

\bibitem[CZ16b]{Chen5}
L.~Chen and Y.~E. Zeytuncu, \emph{Weighted {B}ergman projections on the
  {H}artogs triangle: exponential decay}, New York J. Math. \textbf{22} (2016),
  1271--1282. \MR{3576289}

\bibitem[Edh16a]{E-thesis}
L.~D. Edholm, \emph{The {B}ergman kernel of fat {H}artogs triangles}, ProQuest
  LLC, Ann Arbor, MI, 2016, Thesis (Ph.D.)--The Ohio State University.
  \MR{3624341}

\bibitem[Edh16b]{E-pacific}
\bysame, \emph{Bergman theory of certain generalized {H}artogs triangles},
  Pacific J. Math. \textbf{284} (2016), no.~2, 327--342. \MR{3544303}

\bibitem[EM16]{EMcN}
L.~D. {Edholm} and J.~D. {McNeal}, \emph{The {B}ergman projection on fat
  {H}artogs triangles: {$L^p$} boundedness}, Proc. Amer. Math. Soc.
  \textbf{144} (2016), no.~5, 2185--2196. \MR{3460177}

\bibitem[EM17]{EMcN17}
\bysame, \emph{Bergman subspaces and subkernels: degenerate {$L^p$} mapping and
  zeroes}, J. Geom. Anal. \textbf{27} (2017), no.~4, 2658--2683. \MR{3707989}

\bibitem[EM20]{EdMc-sob}
\bysame, \emph{{Sobolev mapping of some holomorphic projections}}, J. Geom.
  Anal. (2020), https://doi.org/10.1007/s12220--019--00345--6.

\bibitem[Gra14a]{Grafakos-cl}
L.~Grafakos, \emph{Classical {F}ourier analysis}, third ed., Graduate Texts in
  Mathematics, vol. 249, Springer, New York, 2014. \MR{3243734}

\bibitem[Gra14b]{Grafakos}
\bysame, \emph{Modern {F}ourier analysis}, third ed., Graduate Texts in
  Mathematics, vol. 250, Springer, New York, 2014. \MR{3243741}

\bibitem[HW19a]{HW2}
Z.~{Huo} and B.~D. {Wick}, \emph{{Weak-type estimates for the Bergman
  projection on the polydisc and the Hartogs triangle}}, arXiv e-prints (2019),
  arXiv:1909.05902.

\bibitem[HW19b]{HW}
\bysame, \emph{{Weighted estimates for the Bergman projection on the Hartogs
  triangle}}, arXiv e-prints (2019), arXiv:1904.10501.

\bibitem[KLT19]{Khanh}
T.~V. Khanh, J~Liu, and P.~T. Thuc, \emph{Bergman-{T}oeplitz operators on fat
  {H}artogs triangles}, Proc. Amer. Math. Soc. \textbf{147} (2019), no.~1,
  327--338. \MR{3876752}

\bibitem[Kod16a]{K1}
A.~Kodama, \emph{Correction: {O}n the holomorphic automorphism group of a
  generalized {H}artogs triangle [ {MR}3476134]}, Tohoku Math. J. (2)
  \textbf{68} (2016), no.~1, 47--48. \MR{3476135}

\bibitem[Kod16b]{K}
\bysame, \emph{On the holomorphic automorphism group of a generalized {H}artogs
  triangle}, Tohoku Math. J. (2) \textbf{68} (2016), no.~1, 29--45.
  \MR{3476134}

\bibitem[KP07]{P1}
S.~G. Krantz and M.~M. Peloso, \emph{New results on the {B}ergman kernel of the
  worm domain in complex space}, Electron. Res. Announc. Math. Sci. \textbf{14}
  (2007), 35--41. \MR{2336324}

\bibitem[KP08]{P2}
\bysame, \emph{The {B}ergman kernel and projection on non-smooth worm domains},
  Houston J. Math. \textbf{34} (2008), no.~3, 873--950. \MR{2448387}

\bibitem[KPS16]{P4}
S.~G. Krantz, M.~M. Peloso, and C.~Stoppato, \emph{Bergman kernel and
  projection on the unbounded {D}iederich-{F}orn\ae ss worm domain}, Ann. Sc.
  Norm. Super. Pisa Cl. Sci. (5) \textbf{16} (2016), no.~4, 1153--1183.
  \MR{3616330}

\bibitem[Lan89]{L}
M.~Landucci, \emph{Proper holomorphic mappings between some nonsmooth domains},
  Ann. Mat. Pura Appl. (4) \textbf{155} (1989), 193--203. \MR{1042834}

\bibitem[Leb72]{Lebedev}
N.~N. Lebedev, \emph{Special functions and their applications}, Dover
  Publications, Inc., New York, 1972, Revised edition, translated from the
  Russian and edited by Richard A. Silverman, Unabridged and corrected
  republication. \MR{0350075}

\bibitem[LS04]{LS04}
L.~Lanzani and E.~M. Stein, \emph{Szeg\"o and {B}ergman projections on
  non-smooth planar domains}, J. Geom. Anal. \textbf{14} (2004), no.~1, 63--86.
  \MR{2030575 (2004m:30063)}

\bibitem[LS19]{LS}
\bysame, \emph{On regularity and irregularity of certain holomorphic singular
  integral operators}, arXiv (2019), 15 pages.

\bibitem[Mon16a]{M3}
A.~Monguzzi, \emph{A comparison between the {B}ergman and {S}zeg{\H{o}} kernels
  of the non-smooth worm domain {$D'_\beta$}}, Complex Anal. Oper. Theory
  \textbf{10} (2016), no.~5, 1017--1043. \MR{3506304}

\bibitem[Mon16b]{M1}
\bysame, \emph{Hardy spaces and the {S}zeg{\H{o}} projection of the non-smooth
  worm domain {$D'_\beta$}}, J. Math. Anal. Appl. \textbf{436} (2016), no.~1,
  439--466. \MR{3440104}

\bibitem[Mon16c]{M2}
\bysame, \emph{On {H}ardy spaces on worm domains}, Concr. Oper. \textbf{3}
  (2016), 29--42. \MR{3491881}

\bibitem[MP17a]{M4}
A.~Monguzzi and M.~M. Peloso, \emph{Regularity of the {S}zeg{\H{o}} projection
  on model worm domains}, Complex Var. Elliptic Equ. \textbf{62} (2017), no.~9,
  1287--1313. \MR{3662506}

\bibitem[MP17b]{M5}
\bysame, \emph{Sharp estimates for the {S}zeg{\H{o}} projection on the
  distinguished boundary of model worm domains}, Integral Equations Operator
  Theory \textbf{89} (2017), no.~3, 315--344. \MR{3722049}

\bibitem[MZ15]{MZ15}
S.~Munasinghe and Y.~E. Zeytuncu, \emph{Irregularity of the {S}zeg\"o
  projection on bounded pseudoconvex domains in {$\mathbb C^2$}}, Integral
  Equations Operator Theory \textbf{82} (2015), no.~3, 417--422. \MR{3355787}

\bibitem[Sha15]{Shaw}
M.-C. Shaw, \emph{The {H}artogs triangle in complex analysis}, Geometry and
  topology of submanifolds and currents, Contemp. Math., vol. 646, Amer. Math.
  Soc., Providence, RI, 2015, pp.~105--115. \MR{3384976}

\bibitem[Ste72]{Stein-holo}
E.~M. Stein, \emph{Boundary behavior of holomorphic functions of several
  complex variables}, Princeton University Press, Princeton, N.J.; University
  of Tokyo Press, Tokyo, 1972, Mathematical Notes, No. 11. \MR{0473215}

\bibitem[Wu99]{Wu}
Z.~Wu, \emph{Carleson measures and multipliers for {D}irichlet spaces}, J.
  Funct. Anal. \textbf{169} (1999), no.~1, 148--163. \MR{1726750}

\bibitem[Zhu88]{zhu-poly}
K.~Zhu, \emph{The {B}ergman spaces, the {B}loch space, and {G}leason's
  problem}, Trans. Amer. Math. Soc. \textbf{309} (1988), no.~1, 253--268.
  \MR{931533}

\bibitem[Zhu04]{zhu-t}
\bysame, \emph{Translating inequalities between {H}ardy and {B}ergman spaces},
  Amer. Math. Monthly \textbf{111} (2004), no.~6, 520--525. \MR{2076585}

\bibitem[Zhu05]{Zhu}
\bysame, \emph{Spaces of holomorphic functions in the unit ball}, Graduate
  Texts in Mathematics, vol. 226, Springer-Verlag, New York, 2005. \MR{2115155}

\end{thebibliography}
 \bibliographystyle{amsalpha}

\end{document}